\documentclass{siamart0516}
  \usepackage{hyperref}
\usepackage{latexsym,amssymb,enumerate,amsmath} 
  \usepackage{graphicx} 
  \usepackage{tikz}
  \usepackage[square,sort,comma,numbers]{natbib}
\usepackage{pgfplots}

\numberwithin{equation}{section}
\theoremstyle{definition}
\newtheorem*{definition2*}{Definition}
\newtheorem{remark}{Remark}

\newcommand{\dd}{\textup{d}}
\def\eps{\varepsilon}
\def\E{\mathbb{E}}
\def\P{\mathbb{P}}
\def\R{\mathbb{R}}

\def\black{\color{black}}

\def\s{\sigma}
\def\L{\mathcal{L}}
\newcommand{\deuc}{L_{\textup{euc}}}
\newcommand{\drie}{L_{\textup{rie}}}
\newcommand{\phys}{\textup{phys}}
\newcommand{\no}{\textup{empty}}

\usepackage{amsfonts}
\usepackage{graphicx}
\usepackage{epstopdf}
\usepackage{algorithmic}
\ifpdf
  \DeclareGraphicsExtensions{.eps,.pdf,.png,.jpg}
\else
  \DeclareGraphicsExtensions{.eps}
\fi

\newcommand{\TheTitle}{The effects of fast inactivation on conditional first passage times of mortal diffusive searchers}
\newcommand{\ShortTitle}{Fast inactivation and first passage times}
\newcommand{\TheAuthors}{Sean D. Lawley}

\headers{\ShortTitle}{\TheAuthors}

\title{{\TheTitle}\thanks{
\funding{The author was supported by the National Science Foundation (DMS-1944574, DMS-1814832, and DMS-1148230).}}}

\author{Sean D. Lawley\thanks{Department of Mathematics, University of Utah, Salt Lake City, UT 84112 USA (\texttt{lawley@math.utah.edu}).}}
\date{\today}

\ifpdf
\hypersetup{
  pdftitle={\TheTitle},
  pdfauthor={\TheAuthors}
}
\fi

\begin{document}

\maketitle


\begin{abstract}
The first time a searcher finds a target is called a first passage time (FPT). In many physical, chemical, and biological processes, the searcher is ``mortal,'' which means that the searcher might become inactivated (degrade, die, etc.)\  before finding the target. In the context of intracellular signaling, an important recent work discovered that fast inactivation can drastically alter the conditional FPT distribution of a mortal diffusive searcher, if the searcher is conditioned to find the target before inactivation. In this paper, we prove a general theorem which yields an explicit formula for all the moments of such conditional FPTs in the fast inactivation limit. This formula is quite universal, as it holds under very general conditions on the diffusive searcher dynamics, the target, and the spatial domain. These results prove in significant generality that if inactivation is fast, then the conditional FPT compared to the FPT without inactivation is (i) much faster, (ii) much less affected by spatial heterogeneity, and (iii) much less variable. Our results agree with recent computational and theoretical analysis of a certain discrete intracellular diffusion model and confirm a conjecture related to the effect of spatial heterogeneity on intracellular signaling. 
\end{abstract}


\begin{keywords}
mortal random walker, 
evanescent particle, 
heterogeneous diffusion, 
Brownian motion,
intracellular signaling,
conditional first passage time
\end{keywords}
\begin{AMS}
92C05, 
92C37, 
92C40, 
60G07, 
60G40 
\end{AMS}

\section{Introduction}
 
A first passage time (FPT) is the first time a ``searcher'' finds a ``target.'' FPTs have been used extensively to understand timescales in a vast array of physical, chemical, and biological systems \cite{redner2001}. Intracellular signaling processes provide prototypical examples, as signal propagation can depend on a protein (the searcher) diffusing from the cell membrane to the nucleus (the target) \cite{liu2018}. Indeed, questions in cell biology have been especially important in driving FPT research in the past few decades. For example, the complexity of cellular systems has prompted the study of how FPTs depend on small targets \cite{holcman2014, schuss_narrow_2007, ward10, ward10b}, heterogeneous diffusion \cite{chubynsky2014, godec2017, vaccario2015, PB7, PB8, PB10}, evacuation processes \cite{newby2016}, {\black the initial distance between searchers and targets \cite{kolesov2007, pulkkinen2013},} stochastically gated targets \cite{Reingruber2009, reingruber2010, Ammari2011, PB2, PB3, PB11}, moving targets \cite{lindsay2015, lawley2019dtmfpt}, and reversible binding \cite{grebenkov2017imp, lawley2019imp}.

An especially challenging issue in studying intracellular signaling pathways stems from the complicated geometry of the cytosolic space \cite{goodsell2018}. While mathematical models often depict the cytoplasm as an empty space \cite{munoz2009, neves2008, giese2018}, it is actually crowded, tortuous, and highly heterogeneous \cite{blum1989, golding2006, etoc2018}. In fact, several important works have found that such tortuous and crowded geometries can drastically affect FPTs \cite{benichou2010geometry, isaacson2011, woringer2014}. More precisely, let $\tau_{\phys}>0$ denote the FPT in the physiological case in which an intracellular searcher diffuses through a crowded, heterogeneous space to reach the target (see Figure~\ref{figschem}a). Further, let $\tau_{\no}>0$ denote the FPT for the same searcher, except that it diffuses in an empty, homogeneous space to reach the target (see Figure~\ref{figschem}b). These prior works found that $\tau_{\phys}$ and $\tau_{\no}$ can have vastly different statistics \cite{benichou2010geometry, isaacson2011, woringer2014}.

\begin{figure}
  \centering
    \includegraphics[width=1\textwidth]{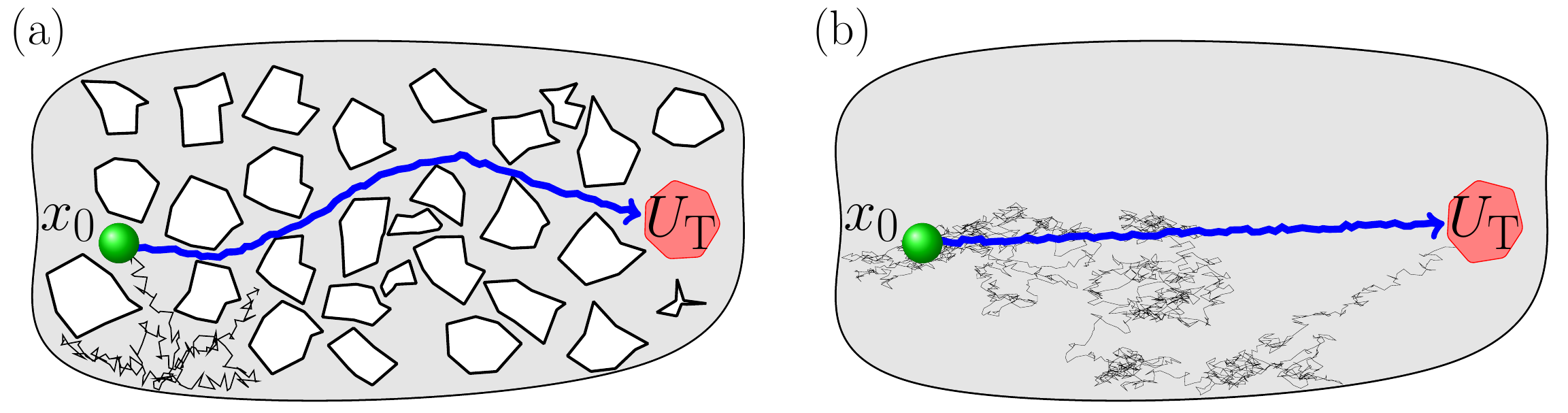}
 \caption{A crowded domain with many reflecting obstacles (panel (a)) and an idealized empty domain (panel (b)). In each panel, the green ball labeled $x_{0}$ is the searcher starting position and the red region labeled $U_{\textup{T}}$ is the target. In each panel, the thin black trajectories depict typical searcher paths, which yield vastly different FPTs to the target ($\tau_{\phys}$ in (a) and $\tau_{\no}$ in (b)), as a searcher can get stuck in the maze of obstacles in (a). The thick blue trajectories correspond to searchers which reach the target very quickly by following the shortest path to the target. The lengths of the shortest paths ($L_{\phys}$ in (a) and $L_{\no}$ in (b)) and the corresponding searcher FPTs in the two panels are similar. Searchers which deviate from a direct path to the target can be filtered out by either considering the fastest FPT out of many FPTs \eqref{fastest} or by conditioning that the searcher reaches the target before a fast inactivation time.
 }
 \label{figschem}
\end{figure}

It was recently posited that the effects of a crowded geometry can be mitigated if the system depends on the arrival of the fastest searcher out of many searchers, since the fastest searcher tends to follow the shortest path to the target \cite{lawley2020uni}. To explain more precisely, let $\{\tau_{\phys,n}\}_{n=1}^{N}$ be the FPTs of $N\gg1$ independent and identically distributed (iid) random searchers diffusing through a crowded space with obstacles. Similarly, let $\{\tau_{\no,n}\}_{n=1}^{N}$ be the FPTs of $N\gg1$ iid random searchers diffusing through an empty space. Define the fastest FPTs (also called \emph{extreme} FPTs \cite{lawley2020uni, lawley2020esp1, lawley2020esp4, basnayake2019, schuss2019, coombs2019, redner2019, sokolov2019, rusakov2019, martyushev2019, tamm2019, basnayake2019c}),
\begin{align}\label{fastest}
\begin{split}
T_{\phys,N}
&:=\min\{\tau_{\phys,1},\dots,\tau_{\phys,N}\},\\
T_{\no,N}
&:=\min\{\tau_{\no,1},\dots,\tau_{\no,N}\}.
\end{split}
\end{align}
Under some mild assumptions, the following asymptotic behavior of the $m$th moment of the fastest FPTs was proven in the case of many searchers \cite{lawley2020uni},
\begin{align}\label{previous}
\E[(T_{\phys,N})^{m}]\sim\Big(\frac{L_{\phys}^{2}}{4D\ln N}\Big)^{m},\quad
\E[(T_{\no,N})^{m}]\sim\Big(\frac{L_{\no}^{2}}{4D\ln N}\Big)^{m},\quad\text{as }N\to\infty,
\end{align}
where $D>0$ is the searcher diffusivity, $L_{\phys}>0$ is the length of the shortest path from the possible searcher starting locations to the target that avoids obstacles and $L_{\no}>0$ is the length of the shortest path from the possible searcher starting locations to the target regardless of obstacles. 
Even in very tortuous domains, it is reasonable to expect that $L_{\phys}\approx L_{\no}$, and thus \eqref{previous} implies that $T_{\phys,N}$ and $T_{\no,N}$ have similar statistics if there are many searchers (even though $\tau_{\phys}$ and $\tau_{\no}$ have vastly different statistics). This is illustrated in Figure~\ref{figschem}.

In addition, the results of \cite{lawley2020uni} imply that fastest FPTs decrease variability. Indeed, it follows from \eqref{previous} that the coefficient of variation of the fastest FPT vanishes as the number of searchers grows,
\begin{align*}
\frac{\sqrt{\text{Variance}[T_{N}]}}{\E[T_{N}]}
\to0\quad\text{as }N\to\infty,
\end{align*}
where $T_{N}$ denotes $T_{\phys,N}$ or $T_{\no,N}$. Summarizing, if there are many searchers, then the fastest FPT compared to a single FPT is (i) much faster, (ii) much less affected by obstacles, and (iii) much less variable. These three points ultimately stem from the fact that considering the fastest FPT filters out searchers which do not take {\black the shortest path to the target. In the language of \cite{godec2016x, grebenkov2018strong}, the fastest searchers take ``direct paths'' rather than ``indirect paths.''}

An important recent work \cite{ma2020} revealed that \emph{fast inactivation} can have a similar effect on FPTs by filtering out searchers which deviate from the shortest path to the target. In \cite{ma2020}, the authors reconstructed organelle surfaces within human B cells from soft X-ray tomographic images and modeled them as reflecting obstacles in the cytoplasm. They then numerically solved the Fokker-Planck equation describing the distribution of a diffusive searcher moving through this heterogeneous domain. From this distribution, they calculated the full distribution of the FPT for the searcher to reach the target (the nucleus). They also derived several interesting rigorous mathematical results regarding their discretized diffusion model.

In the case that the searcher can diffuse indefinitely before finding the target (i.e.\ no inactivation), these authors found that the presence of reflecting organelle barriers significantly affects FPTs \cite{ma2020}. More precisely, if $\tau_{\phys}$ denotes the FPT for the physiological case which includes the organelle barriers and $\tau_{\no}$ denotes the FPT in the case of an empty cytoplasm, then they found that $\tau_{\phys}$ is typically much slower and much more variable than $\tau_{\no}$.

However, since signaling molecules cannot actually diffuse indefinitely, an exponentially distributed \emph{inactivation time} was introduced in \cite{ma2020}. Remarkably, these authors found that if one conditions that the searcher reaches the target before the inactivation time, then the FPT statistics can be dramatically altered. In particular, if inactivation is fast, then the conditional FPT compared to the FPT without inactivation is (i) much faster, (ii) much less affected by obstacles, and (iii) much less variable \cite{ma2020}.

More generally, the finite lifetime of a random searcher characterizes many biological and chemical processes. Indeed, the competition between search and inactivation has been studied in the physics and chemistry literatures, where such searchers are called ``mortal'' or ``evanescent'' \cite{abad2010, abad2012, abad2013, yuste2013, meerson2015b, meerson2015, grebenkov2017}. Motivations for this prior work include (1) sperm cells searching for an egg despite a very high mortality rate, (2) animals or bacteria foraging for food, (3) extinction of a fluorescent signal in certain bio-imaging methods, (4) messenger RNA searching for a ribosome, and (5) nuclear waste storage \cite{grebenkov2017}.

In this paper, we investigate how fast inactivation affects FPTs in a general mathematical setting. We prove a theorem which gives an explicit asymptotic formula for every moment of a FPT that is conditioned on being less than a fast inactivation time. These moments are given in terms of the short time behavior of the unconditioned FPT distribution on a logarithmic scale. We then combine this theorem with recent results developed for fastest FPTs to obtain a simple and remarkably universal formula for these conditional FPT moments. This formula involves a certain geodesic distance from the possible searcher starting locations to the target. As a corollary to these results, we confirm a conjecture made in \cite{ma2020} regarding how intracellular obstacles affect conditional FPT statistics.

The rest of this paper is organized as follows. In section~\ref{results}, we briefly summarize our main results. In section~\ref{math}, we state and prove our main theorem. In section~\ref{examples}, be apply this theorem to several examples of FPTs for diffusive searchers. We conclude by discussing relations to prior work and biological implications.

\section{Summary of main results}\label{results}

Let $\tau$ be any random variable satisfying
\begin{align}\label{log0}
\lim_{t\to0+}t\ln\P(\textcolor{black}{\tau\le t})=-C<0,
\end{align}
for some constant $C>0$. Suppose ${{\s}}>0$ is an independent gamma random variable with rate $\lambda>0$ and shape ${\beta}>0$. Note that if we take ${\beta}=1$, then $\s$ reduces to an exponential random variable with rate $\lambda>0$. Our main theorem (Theorem~\ref{main})  states that under these assumptions, the $m$th moment of $\tau$ for any $m\ge1$, conditioned that $\tau<\s$, has the following asymptotic behavior, 
\begin{align}\label{res0}
\E[\tau^{m}\,|\,\tau<{{\s}}]
\sim\left(\frac{C}{\lambda}\right)^{m/2}\quad\text{as }\lambda\to\infty.
\end{align}
Throughout this work, ``$f\sim g$'' means $f/g\to1$. As a corollary to Theorem~\ref{main}, we prove that the conditional coefficient of variation vanishes as $\lambda\to\infty$,
\begin{align}\label{cor0}
\frac{\sqrt{\text{Variance}[\tau\,|\,\tau<\sigma]}}{\E[\tau\,|\,\tau<\s]}
\to0\quad\text{as }\lambda\to\infty.
\end{align}

The proof of \eqref{res0}-\eqref{cor0} makes no reference to FPTs. That is, \eqref{res0}-\eqref{cor0} are general statements that hold for any random variable $\tau$ satisfying \eqref{log0}. However, we have in mind that $\tau$ is the FPT of a searcher to find a target, and $\s$ is the inactivation time. Hence, \eqref{res0} gives the statistics of the FPT in the event that the searcher finds the target before it is inactivated, in the limit of fast inactivation (large $\lambda$). Notice that \eqref{res0} is independent of the shape parameter $\beta>0$ of the inactivation time (see Remark~\ref{beta} for more on this point).

In view of \eqref{log0} and \eqref{res0}, two questions immediately arise.
\begin{enumerate}[(a)]
\item
What FPTs satisfy \eqref{log0}?
\item
If a FPT satisfies \eqref{log0}, what is $C$?
\end{enumerate}
We answer both of these questions in section~\ref{examples} for several very general examples. Indeed, these examples include $d$-dimensional diffusion processes (i) with general space-dependent diffusion coefficients and drift fields, (ii) on Riemannian manifolds, (iii) with reflecting obstacles, and (iv) with partially absorbing targets.

We find that (a) FPTs of diffusive searchers seem to universally satisfy \eqref{log0} {\black as long as the searchers cannot start arbitrarily close to the target (see the Discussion section for the case that searchers can start arbitrarily close to the target). Further, we find that }(b) the constant $C$ is
\begin{align}\label{C0}
C=\frac{L^{2}}{4D}>0,
\end{align}
where $D>0$ is a characteristic diffusion coefficient and $L>0$ is a certain geodesic distance. Hence, combining \eqref{C0} with \eqref{res0} yields
\begin{align}\label{formula0}
\E[\tau^{m}\,|\,\tau<\s]
\sim\left(\frac{L}{2\sqrt{D\lambda}}\right)^{m}\quad\text{as }\lambda\to\infty.
\end{align}
The geodesic distance $L>0$ is given precisely in section~\ref{examples}. However, we note here that $L>0$ is a certain geodesic between the possible searcher starting locations and the target that (i) avoids reflecting obstacles, (ii) includes any spatial variation or anisotropy in diffusivity, and (iii) includes any geometry in the case of diffusion on a curved space. In addition, the length $L$ is unaffected by forces (i.e.\ a drift field) on the diffusive searcher or a finite absorption rate at the target. The result in \eqref{C0} relies on recent results developed for fastest FPTs \cite{lawley2020uni}.

\section{General analysis of a conditional random variable}\label{math}

In this section, we prove our main theorem. We begin by collecting some basic definitions.


\begin{definition2*}
A random variable ${\s}>0$ is a \textbf{gamma random variable with rate $\lambda>0$ and shape ${\beta}>0$} if its survival probability and probability density function are respectively
\begin{align}\label{gamma}
\begin{split}
S_{{\s}}(t)
&:=\P({\s}>t)
=\frac{\Gamma({\beta},\lambda t)}{\Gamma({\beta})},\quad t\ge0,\\
f_{{\s}}(t)
&:=-\frac{\dd}{\dd t}S_{{\s}}(t)
=\frac{\lambda^{{\beta}}}{\Gamma({\beta})}t^{{\beta}-1}e^{-\lambda t},\quad t>0,
\end{split}
\end{align}
where $\Gamma({\beta},\lambda t)$ and $\Gamma({\beta})$ denote the incomplete and complete gamma functions,
\begin{align*}
\Gamma({\beta},\Lambda)
:=\int_{\Lambda}^{\infty}x^{{\beta}-1}e^{-x}\,\dd x,\quad
\Gamma({\beta})
:=\int_{0}^{\infty}x^{{\beta}-1}e^{-x}\,\dd x.
\end{align*}
If ${\s}$ is a gamma random variable with rate $\lambda>0$ and shape ${\beta}=1$, then \textbf{${\s}$ is an exponential random variable with rate $\lambda>0$}, and \eqref{gamma} simplifies to
\begin{align*}
S_{{\s}}(t)
&:=\P({\s}>t)
=e^{-\lambda t},\quad t\ge0,\\
f_{{\s}}(t)
&:=-\frac{\dd}{\dd t}S_{{\s}}(t)
=\lambda e^{-\lambda t},\quad t>0.
\end{align*}
\end{definition2*}


The next result is our main theorem. It gives an asymptotic formula for the conditional moments of a random variable based on the behavior of its unconditioned distribution on a logarithmic scale. Throughout this work, ``$f\sim g$'' means $f/g\to1$.

\begin{theorem}\label{main}
Let $\tau$ be any random variable satisfying
\begin{align}\label{log}
\lim_{t\to0+}t\ln\P(\textcolor{black}{\tau\le t})=-C<0,
\end{align}
for some constant $C>0$. Suppose ${{\s}}>0$ is an independent gamma random variable with rate $\lambda>0$ and shape ${\beta}>0$ (if ${\beta}=1$, then $\s$ is an exponential random variable). If $m\ge1$, then
\begin{align}\label{res}
\E[\tau^{m}\,|\,\tau<{{\s}}]
:=\frac{\E[\tau^{m}1_{\tau<{{\s}}}]}{\P(\tau<{{\s}})}
\sim\left(\frac{C}{\lambda}\right)^{m/2}\quad\text{as }\lambda\to\infty.
\end{align}
\end{theorem}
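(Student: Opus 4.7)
The plan is a concentration/Laplace argument. Heuristically $F_\tau(t):=\P(\tau\le t)\approx e^{-C/t}$ and $S_\s(t)\approx e^{-\lambda t}$ (up to polynomial factors), so the conditional density of $\tau$ given $\tau<\s$, proportional to $S_\s(t)\,dF_\tau(t)$, is governed by the exponent $-C/t-\lambda t$, which by AM--GM is minimized at $t^{\star}:=\sqrt{C/\lambda}$ with value $-2\sqrt{C\lambda}$. Thus the conditional law should concentrate at $t^{\star}$ and the $m$th conditional moment should satisfy $\E[\tau^m\mid\tau<\s]\sim(t^{\star})^m=(C/\lambda)^{m/2}$. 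To make this precise, I would first convert \eqref{log} into the pointwise bounds $e^{-(C+\eps)/t}\le F_\tau(t)\le e^{-(C-\eps)/t}$ on $(0,\delta(\eps))$, then use independence and \eqref{gamma} to write
\[
\E[\tau^m 1_{\tau<\s}]=\int_0^\infty t^m S_\s(t)\,dF_\tau(t),\qquad \P(\tau<\s)=\int_0^\infty F_\tau(t)\,f_\s(t)\,dt,
\]
and finally fix $\eta>0$, set $\rho=\sqrt{C/\lambda}$, and split $[0,\infty)$ into a lower tail $[0,(1-\eta)\rho]$, a bulk $[(1-\eta)\rho,(1+\eta)\rho]$, and an upper tail $[(1+\eta)\rho,\infty)$. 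On the bulk $t^m\in[(1-\eta)^m,(1+\eta)^m]\rho^m$, so it suffices to show that the bulk has conditional probability tending to $1$ and that both tails contribute negligibly to the $m$th conditional moment.

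The two required estimates are a lower bound on $\P(\tau<\s)$ and matching upper bounds on the tail integrals. For the denominator,
\[
\P(\tau<\s)\ge S_\s((1+\delta')\rho)[F_\tau((1+\delta')\rho)-F_\tau((1-\delta')\rho)]
\]
combined with the pointwise $F_\tau$ bounds, the standard tail $S_\s(t)\sim(\lambda t)^{{\beta}-1}e^{-\lambda t}/\Gamma({\beta})$ as $\lambda t\to\infty$, and the choice $\delta'>\eps/C$ (which makes $F_\tau((1-\delta')\rho)$ asymptotically negligible against $F_\tau((1+\delta')\rho)$), yields a lower bound $c_1(\lambda)\,e^{-H\sqrt{C\lambda}}$, with $c_1$ polynomial and $H=(1+\delta')+(1+\eps/C)/(1+\delta')$ which can be made arbitrarily close to $2$. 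For the upper tail I would integrate by parts, discard two non-positive terms, bound $F_\tau(t)\le e^{-(C-\eps)/t}$ on $(0,\delta)$, and observe that the unique minimum of $(C-\eps)/t+\lambda t$ lies strictly below $(1+\eta)\rho$ for small $\eps$, so the integrand is dominated by its boundary value at $(1+\eta)\rho$; this gives
\[
\int_{(1+\eta)\rho}^\infty t^m S_\s(t)\,dF_\tau(t)\le c_2(\lambda)\,\rho^m e^{-K\sqrt{C\lambda}}\quad\text{with}\quad K=(1+\eta)+(1-\eps/C)/(1+\eta).
\]
An analogous argument on the lower tail yields exponent $G=(1-\eta)+(1-\eps/C)/(1-\eta)$. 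The strict AM--GM inequalities $(1\pm\eta)+1/(1\pm\eta)>2$ for $\eta\ne 0$ ensure that, if $\delta'$ and $\eps$ are small enough relative to $\eta$, both $K$ and $G$ strictly exceed $H$. Hence each tail/denominator ratio is bounded by $c(\lambda)\,e^{-(K-H)\sqrt{C\lambda}}\to 0$ (with $c$ polynomial), and letting $\lambda\to\infty$ and then $\eta\to 0$ delivers \eqref{res}.

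The hard part is the upper-tail estimate. The naive bound $\sup_{t\ge(1+\eta)\rho}t^m S_\s(t)$ only produces exponential rate $(1+\eta)\sqrt{C\lambda}$, which is not smaller than the denominator's rate $\approx 2\sqrt{C\lambda}$; the integration-by-parts step is essential to recover the $F_\tau$-factor $e^{-(1-\eps/C)\sqrt{C\lambda}/(1+\eta)}$ that pushes $K$ strictly past $2$. A closely related subtlety is that pairing the upper pointwise bound on $F_\tau$ in the numerator with the lower bound in the denominator introduces an $e^{O(\eps)\sqrt{C\lambda}}$ mismatch; the strictly positive AM--GM slacks $K-2\sim\eta^2/(1+\eta)$ and $G-2\sim\eta^2/(1-\eta)$ supply exactly the margin needed to absorb this mismatch, provided $\eps$ is chosen of order $o(\eta^2)$.
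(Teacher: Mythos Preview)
Your argument is correct and follows the same Laplace--concentration heuristic as the paper, but the implementation is organized differently. The paper first integrates by parts \emph{globally} (Lemma~\ref{convenient}) so that every integral involves only $F(t)\,\dd t$, then introduces the single auxiliary function $h(t)=e^{C/t}F(t)$ (for which \eqref{log} reads $t\ln h(t)\to0$), changes variables $s=t\sqrt{\lambda/C}$, and proves three modular lemmas: concentration of $\int s^{p}e^{-\sqrt{\lambda C}(s+1/s)}h(\sqrt{C/\lambda}s)\,\dd s$ on $[1-\eps,1+\eps]$ (Lemma~\ref{pp}), asymptotic equivalence of such integrals for different powers $p$ (Lemma~\ref{pq}), and negligibility of the term carrying $S_{\s}$ (Lemma~\ref{ps}). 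You instead keep the Stieltjes representation $\int t^{m}S_{\s}\,\dd F_{\tau}$, use the two-sided sandwich $e^{-(C\pm\eps)/t}$ directly, and defer integration by parts to the two tail pieces, where it recovers an explicit $F_{\tau}$ factor whose decay combines with that of $S_{\s}$ to give an exponent strictly exceeding the denominator's via the AM--GM slack $(1\pm\eta)+1/(1\pm\eta)>2$. Your route is a touch more elementary---no auxiliary $h$, no separate lemma for the $S_{\s}$ term, and the $\eps=o(\eta^{2})$ bookkeeping is transparent---while the paper's modular lemmas are cleaner to state and reusable (for instance, Lemma~\ref{pq} immediately gives the result for any pair of exponents without redoing the tail estimates). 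One small point worth making explicit in your write-up: the pointwise bound $F_{\tau}\le e^{-(C-\eps)/t}$ is only valid on $(0,\delta)$, so after the upper-tail IBP you must also dispose of $\int_{\delta}^{\infty}t^{m}f_{\s}(t)\,\dd t$ separately (it is $O(e^{-\lambda\delta})$, hence harmless); similarly, on the lower tail the IBP boundary term at $(1-\eta)\rho$ is \emph{positive} and must be kept and bounded, not discarded---it carries the same exponent $G$ as the remaining integral, so this causes no trouble.
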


Note that \eqref{log} ensures that $\tau$ is positive with probability one. Note also that the equality in \eqref{res} is merely the definition of conditional expectation, where $1_{\tau<\s}$ denotes the indicator function on the event $\tau<\s$. That is,
\begin{align*}
1_{\tau<\s}
=\begin{cases}
1 & \text{if }\tau<\s,\\
0 & \text{if }\tau\ge\s.
\end{cases}
\end{align*}

\begin{remark}\label{beta}{\rm
Notice that \eqref{res} is independent of the shape parameter $\beta$ of the inactivation time $\sigma$. Thus, the conditional FPT moments for (i) an exponentially distributed inactivation time ($\beta=1$) or (ii) any gamma distributed inactivation time (any $\beta>0$) are identical for fast inactivation. This is counterintuitive since, for example, the mean of $\sigma$ grows linearly with $\beta$. Moreover, if $\beta$ is an integer, then $\s$ is equal in distribution to the sum of $\beta$-many independent, exponentially distributed random variables, each with rate $\lambda>0$. Therefore, Theorem~\ref{main} implies that the FPT conditioned on being less than a single exponential time with rate $\lambda$ is identical to the FPT conditioned on being less than a sum of any fixed number of exponential times with rate $\lambda$, provided $\lambda\gg1$. That is, if $\sigma_{1},\dots,\sigma_{\beta}$ are iid with $\sigma_{k}$ exponentially distributed with rate $\lambda>0$ for each $k\in\{1,\dots,\beta\}$, then
\begin{align*}
\E[\tau^{m}\,|\,\tau<\sigma_{1}]
\sim\E\Big[\tau^{m}\,|\,\tau<\sum_{k=1}^{\beta}\sigma_{k}\Big]\quad\text{as }\lambda\to\infty.
\end{align*}
}\end{remark}

The following result is an immediate corollary of Theorem~\ref{main}.
\begin{corollary}\label{cor}
Let $\tau$ and $\s$ be as in Theorem~\ref{main}. The variance of $\tau$, conditioned that $\tau<\s$, which is defined as
\begin{align*}
\textup{Variance}[\tau\,|\,\tau<\s]
:=\E\Big[\big(\tau-\E[\tau\,|\,\tau<\s]\big)^{2}\,|\,\tau<\s\Big],
\end{align*}
vanishes faster than $1/\lambda$ as $\lambda\to\infty$. That is,
\begin{align*}
\lambda\textup{Variance}[\tau\,|\,\tau<\s]
\to0\quad\text{as }\lambda\to\infty.
\end{align*}
Moreover, the coefficient of variation of $\tau$, conditioned that $\tau<\s$, vanishes as $\lambda\to\infty$. That is,
\begin{align*}
\frac{\sqrt{\textup{Variance}[\tau\,|\,\tau<\s]}}{\E[\tau\,|\,\tau<\s]}
\to0\quad\text{as }\lambda\to\infty.
\end{align*}
\end{corollary}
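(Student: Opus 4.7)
The plan is to deduce both claims directly from Theorem~\ref{main} by applying it with $m=1$ and $m=2$. With $m=1$, Theorem~\ref{main} gives $\E[\tau\,|\,\tau<\s]\sim(C/\lambda)^{1/2}$, and squaring yields $(\E[\tau\,|\,\tau<\s])^{2}\sim C/\lambda$. With $m=2$, Theorem~\ref{main} gives $\E[\tau^{2}\,|\,\tau<\s]\sim C/\lambda$. The crucial observation is that the leading-order asymptotics of $\E[\tau^{2}\,|\,\tau<\s]$ and $(\E[\tau\,|\,\tau<\s])^{2}$ coincide, so when we form the variance as the difference of these two quantities, the leading terms cancel.

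More precisely, I would write $\E[\tau^{2}\,|\,\tau<\s]=C/\lambda+o(1/\lambda)$ and $(\E[\tau\,|\,\tau<\s])^{2}=C/\lambda+o(1/\lambda)$, where the second equality follows by squaring the $m=1$ asymptotic from Theorem~\ref{main} (using $(1+o(1))^{2}=1+o(1)$). Subtracting gives
\begin{align*}
\textup{Variance}[\tau\,|\,\tau<\s]
=\E[\tau^{2}\,|\,\tau<\s]-(\E[\tau\,|\,\tau<\s])^{2}
=o(1/\lambda),
\end{align*}
which immediately yields $\lambda\,\textup{Variance}[\tau\,|\,\tau<\s]\to0$ as $\lambda\to\infty$.

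For the coefficient of variation, I would simply divide: $\sqrt{\textup{Variance}[\tau\,|\,\tau<\s]}=o(1/\sqrt{\lambda})$ by the asymptotic $\E[\tau\,|\,\tau<\s]\sim\sqrt{C/\lambda}$, which gives a ratio that is $o(1)$ as $\lambda\to\infty$. There is no real obstacle here, since Theorem~\ref{main} does all the work; the only subtlety is the bookkeeping of lower-order terms, namely recognizing that $\sim$ in Theorem~\ref{main} for both $m=1$ and $m=2$ forces the $1/\lambda$ terms to agree and hence cancel, so that the residual variance decays strictly faster than $1/\lambda$. The argument does not require any additional information about the distribution of $\tau$ beyond \eqref{log}, nor about $\sigma$ beyond its being gamma-distributed.
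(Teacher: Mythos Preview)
Your proposal is correct and takes essentially the same approach as the paper: both apply Theorem~\ref{main} with $m=1$ and $m=2$, observe that $\E[\tau^{2}\,|\,\tau<\s]$ and $(\E[\tau\,|\,\tau<\s])^{2}$ share the same leading asymptotic $C/\lambda$, and conclude that the variance is $o(1/\lambda)$; the coefficient of variation claim then follows by dividing. The only cosmetic difference is that the paper writes the variance calculation as $\lambda\,\textup{Variance}=C\cdot\E[\tau^{2}\,|\,\tau<\s]/(C/\lambda)-C\cdot(\E[\tau\,|\,\tau<\s])^{2}/(C/\lambda)\to C-C=0$ and handles the coefficient of variation via an explicit $\eps$-argument, whereas you phrase the same steps using little-$o$ notation.
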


In order to prove Theorem~\ref{main}, the next lemma puts the conditional $m$th moment, $\E[\tau^{m}\,|\,\tau<{\s}]$, in a form that is convenient for analysis. 

\begin{lemma}\label{convenient}
Let $\tau>0$ be a positive random variable with cumulative distribution
\begin{align*}
F(t):=\P(\tau\le t).
\end{align*}
Let ${{\s}}>0$ be an independent positive random variable with survival probability and probability density function denoted respectively by
\begin{align*}
S_{{\s}}(t):=\P({\s}>t),\quad
f_{{\s}}(t):=-\tfrac{\dd}{\dd t}S_{{\s}}(t).
\end{align*}
If $\E[{\s}^{m}]<\infty$ for some $m>0$, then  
\begin{align}\label{ratio0}
\E[\tau^{m}\,|\,\tau<{\s}]
&=\frac{\int_{0}^{\infty} t^{m} f_{{\s}}(t)F(t)\,\dd t}{\int_{0}^{\infty}f_{{\s}}(t)F(t) \,\dd t}
-\frac{\int_{0}^{\infty}mt^{m-1} S_{{\s}}(t)F(t)\,\dd t}{\int_{0}^{\infty}f_{{\s}}(t)F(t)\,\dd t}.
\end{align}
\end{lemma}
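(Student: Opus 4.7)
\medskip

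\textbf{Proof plan for Lemma~\ref{convenient}.}

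The plan is to compute the numerator and denominator of $\E[\tau^m 1_{\tau<\s}]/\P(\tau<\s)$ separately by conditioning on $\s$ and then manipulating the resulting iterated integrals. Since $\s$ has a density and is independent of $\tau$, conditioning on $\s = t$ immediately yields
\begin{align*}
\P(\tau < \s) = \int_0^\infty f_\s(t) F(t)\,\dd t,
\qquad
\E[\tau^m 1_{\tau < \s}] = \int_0^\infty f_\s(t)\,\E[\tau^m 1_{\tau<t}]\,\dd t.
\end{align*}
The denominator already has the desired form, so the work is entirely on the numerator.

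Next I would write $\E[\tau^m 1_{\tau<t}] = \int_0^t s^m\,\dd F(s)$ and apply Fubini's theorem to swap the order of integration on the region $\{0 < s < t < \infty\}$. The inner $t$-integral then collapses, since $\int_s^\infty f_\s(t)\,\dd t = S_\s(s)$, producing the compact Stieltjes form
\begin{align*}
\E[\tau^m 1_{\tau<\s}] = \int_0^\infty s^m S_\s(s)\,\dd F(s).
\end{align*}

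The final step is an integration by parts with $u = s^m S_\s(s)$ and $\dd v = \dd F(s)$, which yields
\begin{align*}
\int_0^\infty s^m S_\s(s)\,\dd F(s)
= \bigl[s^m S_\s(s) F(s)\bigr]_0^\infty
+ \int_0^\infty F(s)\bigl[s^m f_\s(s) - m s^{m-1} S_\s(s)\bigr]\,\dd s.
\end{align*}
Dividing by $\P(\tau<\s)$ gives exactly \eqref{ratio0}, so the remaining task is to verify that the boundary term vanishes. At $s=0$ the factor $s^m$ kills it for $m \ge 1$; at $s = \infty$ the hypothesis $\E[\s^m] < \infty$ is used via
\begin{align*}
s^m S_\s(s) = s^m\,\P(\s > s) \le \int_s^\infty u^m\,\dd\P(\s \le u) \to 0 \quad\text{as } s \to \infty,
\end{align*}
so $s^m S_\s(s) F(s) \to 0$ as well. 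This tail estimate is the one substantive step in the argument; everything else is a rearrangement of iterated integrals. I expect no serious obstacle beyond being careful that $m$ need only satisfy $m > 0$ (the $s^m \to 0$ at $s=0$ part needs $m > 0$, while the $s = \infty$ part uses only $\E[\s^m] < \infty$) and that $F$ may have atoms, which is fine because $\s$ is continuous and so the Fubini step is unaffected.
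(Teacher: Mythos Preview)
Your argument is correct and is genuinely different from the paper's route. The paper begins with the layer-cake identity $\E[Z]=\int_0^\infty\P(Z>z)\,\dd z$ applied to $Z=\tau^m 1_{\tau<\s}$, rewrites the resulting probability as $\P(t^{1/m}<\tau<\s)$, conditions on $\s$, and then passes through a change of variables $t\mapsto t^m$ together with the identity $\int_0^\infty S_\s(t^{1/m})\,\dd t=\E[\s^m]=\tfrac{1}{m}\int_0^\infty t^{1/m}f_\s(t^{1/m})\,\dd t$ to reach \eqref{ratio0}. Your approach---condition on $\s$, swap by Tonelli to get $\int_0^\infty s^m S_\s(s)\,\dd F(s)$, then integrate by parts---is shorter and more transparent; the hypothesis $\E[\s^m]<\infty$ enters in both proofs at exactly one place (your boundary term at $s=\infty$, the paper's identity involving $\E[\s^m]$), but in your version its role is more visible. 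One small comment: the possible atoms of $F$ are harmless in your integration-by-parts step, not the Fubini step, and the reason is that $s^m S_\s(s)$ is continuous since $\s$ has a density; you might relocate that remark accordingly.
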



We prove Theorem~\ref{main}, Corollary~\ref{cor}, and Lemma~\ref{convenient} in the subsections below. As the full proof of Theorem~\ref{main} is relatively long, we end this subsection by sketching the main idea of the proof.

\begin{proof}[Sketch of proof of Theorem~\ref{main}]

Upon using the formulas for $f_{{\s}}(t)$ and $S_{{\s}}(t)$ in \eqref{gamma}, the expression in \eqref{ratio0} becomes
\begin{align}\label{ratios}
\E[\tau^{m}\,|\,\tau<{\s}]
&=\frac{\int_{0}^{\infty} t^{m+{\beta}-1} e^{-\lambda t}F(t)\,\dd t}{\int_{0}^{\infty}t^{{\beta}-1} e^{-\lambda t}F(t) \,\dd t}
-\frac{\int_{0}^{\infty}mt^{m-1} \Gamma({\beta},\lambda t)F(t)\,\dd t}{\lambda^{{\beta}}\int_{0}^{\infty}t^{{\beta}-1} e^{-\lambda t}F(t) \,\dd t}.
\end{align}
It is clear that the large $\lambda$ behavior of the integrals in \eqref{ratios} is determined by the short time behavior of the integrands. In particular, for any power $p>-1$, we expect that
\begin{align*}
\int_{0}^{\infty}t^{p} e^{-\lambda t}F(t) \,\dd t
&\approx\int_{0}^{\eps}t^{p} e^{-\lambda t}F(t) \,\dd t\quad\text{for $\eps>0$ and $\lambda\gg1$}.
\end{align*}
Furthermore, the assumption in \eqref{log} means very roughly that
\begin{align*}
F(t)
\approx e^{-C/t}\quad\text{for }t\ll1.
\end{align*}
Hence, we expect that 
\begin{align}\label{simple}
\int_{0}^{\eps}t^{p} e^{-\lambda t}F(t) \,\dd t
&\approx\int_{0}^{\eps}t^{p} e^{-\lambda t-C/t} \,\dd t\quad\text{for $\eps>0$ and $\lambda\gg1$}.
\end{align}
To determine the large $\lambda$ behavior of the integral in the righthand side of \eqref{simple}, we first note that a simple calculus exercise shows that the maximum of the exponential factor in the integrand occurs at $t=\sqrt{C/\lambda}$. We therefore make the change of variables,
\begin{align*}
s=t\sqrt{\lambda/C},
\end{align*}
and obtain
\begin{align}
\int_{0}^{\eps} t^{p}e^{-\lambda t-C/t}\,\dd t
=\left(\frac{C}{\lambda}\right)^{(p+1)/2}\int_{0}^{\sqrt{\lambda/C}\eps}s^{p}e^{-\sqrt{\lambda C}(s+1/s)}\,\dd s,\label{simple2}
\end{align}
and the maximum of the exponential factor in the integrand now occurs at $s=1$. Applying Laplace's method \cite{bender2013} to \eqref{simple2} then yields
\begin{align*}
\int_{0}^{\sqrt{\lambda/C}\eps}s^{p}e^{-\sqrt{\lambda C}(s+1/s)}\,\dd s
\sim
\sqrt{\frac{2\pi}{2\sqrt{\lambda C}}}e^{-2\sqrt{\lambda/C}}\quad\text{as }\lambda\to\infty.
\end{align*}
If we approximate the incomplete gamma function as
\begin{align*}
\Gamma({\beta},\lambda t)\approx(\lambda t)^{{\beta}-1}e^{-\lambda t},\quad\text{for }\lambda\gg1,
\end{align*}
then we similarly obtain
\begin{align*}
\begin{split}
\int_{0}^{\infty}mt^{m-1} \Gamma({\beta},\lambda t)F(t)\,\dd t
&\approx\lambda^{{\beta}-1}m\int_{0}^{\infty}t^{m+{\beta}-2} e^{-\lambda t}F(t)\,\dd t\\
&\approx\lambda^{{\beta}-1}m\left(\frac{C}{\lambda}\right)^{\frac{m+{\beta}-1}{2}}\sqrt{\frac{2\pi}{2\sqrt{\lambda C}}}e^{-2\sqrt{\lambda/C}}\quad\text{for }\lambda\gg1.
\end{split}
\end{align*}
Combining these approximations with \eqref{ratios} yields the desired result,
\begin{align*}
\E[\tau^{m}\,|\,\tau<{\s}]
&\approx
\frac{(\frac{C}{\lambda})^{\frac{m+{\beta}}{2}}\sqrt{\frac{2\pi}{2\sqrt{\lambda C}}}e^{-2\sqrt{\lambda/C}}}{(\frac{C}{\lambda})^{\frac{{\beta}}{2}}\sqrt{\frac{2\pi}{2\sqrt{\lambda C}}}e^{-2\sqrt{\lambda/C}}}
-\frac{\lambda^{{\beta}-1}m(\frac{C}{\lambda})^{\frac{m+{\beta}-1}{2}}\sqrt{\frac{2\pi}{2\sqrt{\lambda C}}}e^{-2\sqrt{\lambda/C}}}{\lambda^{{\beta}}(\frac{C}{\lambda})^{\frac{{\beta}}{2}}\sqrt{\frac{2\pi}{2\sqrt{\lambda C}}}e^{-2\sqrt{\lambda/C}}}\\
&=\left(\frac{C}{\lambda}\right)^{m/2}
-\frac{m}{\lambda}\left(\frac{C}{\lambda}\right)^{\frac{m-1}{2}}
\approx\left(\frac{C}{\lambda}\right)^{m/2}\quad\text{for }\lambda\gg1.
\end{align*}
We make this argument rigorous in the subsections below.
\end{proof}

\subsection{Three lemmas and the proof of Theorem~\ref{main}}

\begin{lemma}\label{pp}
Let $\tau>0$ be as in Theorem~\ref{main} with $F(t):=\P(\textcolor{black}{\tau\le t})$. Define
\begin{align}\label{h}
h(t)
:=e^{C/t}F(t),\quad t>0.
\end{align}
If $p>-1$ and $\eps\in(0,1)$, then
\begin{align}
&\int_{0}^{\infty}t^{p}e^{-\lambda t}F(t)\,\dd t
=\left(\frac{C}{\lambda}\right)^{(p+1)/2}\int_{0}^{\infty}s^{p} e^{-\sqrt{\lambda C}(s+1/s)}h(\sqrt{C/\lambda}s)\,\dd s\label{claimcv}\\
&\quad\sim\left(\frac{C}{\lambda}\right)^{(p+1)/2}\int_{1-\eps}^{1+\eps}s^{p}  e^{-\sqrt{\lambda C}(s+1/s)}h(\sqrt{C/\lambda}s)\,\dd s\quad\text{as }\lambda\to\infty.\label{claim0}
\end{align}
\end{lemma}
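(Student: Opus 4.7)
The identity \eqref{claimcv} is a direct verification: substitute $s=t\sqrt{\lambda/C}$ in the left-hand integral, so $\dd t=\sqrt{C/\lambda}\,\dd s$, write $F(t)=e^{-C/t}h(t)$ using the definition \eqref{h} of $h$, and observe that the exponent $-\lambda t-C/t$ becomes $-\sqrt{\lambda C}(s+1/s)$. The nontrivial content is \eqref{claim0}, which amounts to showing that the two tail integrals over $(0,1-\eps)$ and $(1+\eps,\infty)$ are $o(1)$ times the bulk integral over $(1-\eps,1+\eps)$, where I set $M:=\sqrt{\lambda C}$ and $g(s):=s+1/s$. The only way the factor $h$ can spoil the Laplace heuristic is by blowing up near $0$, so the main tool is an immediate consequence of \eqref{log}: for every $\alpha>0$ there exists $t_\alpha>0$ such that
\[
e^{-\alpha/t}\ \le\ h(t)\ \le\ e^{\alpha/t}\qquad\text{for every }t\in(0,t_\alpha].
\]
Outside $[0,t_\alpha]$, $h$ is continuous with $h(t)\to 1$ as $t\to\infty$ (since $F(t)\to 1$), hence bounded by some constant $H$.

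For the bulk lower bound I would restrict to a smaller window $[1-\eta,1+\eta]\subset(1-\eps,1+\eps)$ with $\eta\in(0,\eps)$. The identity $g(s)-2=(s-1)^{2}/s$ gives $g(s)\le 2+\eta^{2}/(1-\eta)$ on this window, while the lower $h$-bound gives $h(\sqrt{C/\lambda}\,s)\ge e^{-(\alpha/C)M/(1-\eta)}$ once $\lambda$ is so large that $\sqrt{C/\lambda}(1+\eta)\le t_\alpha$. Since $s^{p}$ is bounded below on $[1-\eta,1+\eta]$ by a positive constant, the bulk is bounded below by a positive constant times $\eta\,e^{-MR_{b}}$, where
\[
R_{b}\ :=\ 2+\frac{\eta^{2}+\alpha/C}{1-\eta}.
\]
For the right tail I would split $[1+\eps,\infty)$ at $s=t_\alpha\sqrt{\lambda/C}$. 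On the inner piece, the upper $h$-bound converts the integrand to $s^{p}\exp\!\bigl(-M\bigl(s+(1-\alpha/C)/s\bigr)\bigr)$; for $\alpha<C\eps^{2}$ the map $s\mapsto s+(1-\alpha/C)/s$ is convex and increasing on $[1+\eps,\infty)$ (its global minimum sits at $\sqrt{1-\alpha/C}<1+\eps$), attaining value $R_{t}:=2+(\eps^{2}-\alpha/C)/(1+\eps)$ at the left endpoint, and a routine convex/monotone Laplace estimate bounds the integral by $O(e^{-MR_{t}}/M)$. On the outer piece $h\le H$ makes the integrand $\le Hs^{p}e^{-Ms}$, whose integral is $O(e^{-t_\alpha\lambda})$, super-exponentially smaller than $e^{-MR_{t}}$. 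The left tail $(0,1-\eps)$ is handled analogously, subsplitting at a small $\eps_{0}>0$ so that on $(0,\eps_{0})$ the $1/s$ term in the same majorant forces super-exponential decay.

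The only real obstacle is then the algebraic inequality $R_{b}<R_{t}$, i.e.
\[
\frac{\eta^{2}+\alpha/C}{1-\eta}\ <\ \frac{\eps^{2}-\alpha/C}{1+\eps}.
\]
For any fixed $\eps\in(0,1)$ this is easily met by choosing $\eta$ and $\alpha$ small enough (for instance, first fix $\eta=\eps/2$ and then shrink $\alpha$ so that the left side is strictly less than $\tfrac{1}{2}\eps^{2}/(1+\eps)$). With such a choice, the ratio of tails to bulk is $O(e^{-M(R_{t}-R_{b})}/M)\to 0$ as $\lambda\to\infty$, which is exactly \eqref{claim0}. I anticipate this coupled choice to be the main delicacy: the Laplace-window parameter $\eta$ and the logarithmic-slack parameter $\alpha$ from \eqref{log} play against each other and must be tuned jointly.
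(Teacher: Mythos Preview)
Your argument is correct and follows essentially the same route as the paper: both localize around $s=1$ by using that \eqref{log} forces $|\ln h(t)|=o(1/t)$ (equivalently your two-sided bound $e^{-\alpha/t}\le h(t)\le e^{\alpha/t}$), so the $h$-factor perturbs the Laplace exponent $s+1/s$ by an arbitrarily small multiple of $1/s$, and one then compares exponential rates on nested windows---the paper uses $[1-\eps/4,1+\eps/4]\subset[1-\eps/2,1+\eps/2]\subset[1-\eps,1+\eps]$ and tracks the constants $\delta_1,\delta_2,\delta_3$, while you tune $\eta$ and $\alpha$ jointly against $\eps$. One cosmetic slip that does not affect the argument: $h$ need not be continuous (the CDF $F$ may jump) and $F(t)\to1$ is not assumed, but the boundedness you actually use follows directly from $h(t)=e^{C/t}F(t)\le e^{C/t_\alpha}$ for $t\ge t_\alpha$.
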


\begin{lemma}\label{pq}
If $p>-1$, $q>-1$, and $h$ is as in \eqref{h}, then
\begin{align*}
&\int_{0}^{\infty}s^{p} e^{-\sqrt{\lambda C}(s+1/s)}h(\sqrt{C/\lambda}s)\,\dd s
\sim\int_{0}^{\infty}s^{q}e^{-\sqrt{\lambda C}(s+1/s)}h(\sqrt{C/\lambda}s)\,\dd s\quad\text{as }\lambda\to\infty.
\end{align*}
\end{lemma}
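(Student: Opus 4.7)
The plan is to apply Lemma~\ref{pp} to reduce each integral to the short window $(1-\eps,1+\eps)$, where the inner exponential $e^{-\sqrt{\lambda C}(s+1/s)}$ concentrates around its unique minimum at $s=1$, and then exploit the fact that $s^{p}$ and $s^{q}$ are uniformly close to the common value $1^{p}=1^{q}=1$ on that window. The content of the lemma is essentially that the power factor is immaterial to leading order because the mass of the integral lives in an arbitrarily small neighborhood of $s=1$.

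Fix any $\delta>0$. By continuity of $s\mapsto s^{p}$ and $s\mapsto s^{q}$ at $s=1$, choose $\eps\in(0,1)$ so small that $1-\delta<s^{p}<1+\delta$ and $1-\delta<s^{q}<1+\delta$ on $(1-\eps,1+\eps)$. Write
\begin{equation*}
g_{\lambda}(s):=e^{-\sqrt{\lambda C}(s+1/s)}h(\sqrt{C/\lambda}\,s).
\end{equation*}
Assumption \eqref{log} forces $F(t)>0$, hence $h(t)>0$, for all sufficiently small $t>0$, so $g_{\lambda}>0$ on $(1-\eps,1+\eps)$ once $\lambda$ is large enough. Integrating the pointwise bounds on $s^{p}$ and $s^{q}$ against $g_{\lambda}$ then gives, for all such $\lambda$,
\begin{equation*}
\frac{1-\delta}{1+\delta}\,\leq\,\frac{\int_{1-\eps}^{1+\eps}s^{p}\,g_{\lambda}(s)\,\dd s}{\int_{1-\eps}^{1+\eps}s^{q}\,g_{\lambda}(s)\,\dd s}\,\leq\,\frac{1+\delta}{1-\delta}.
\end{equation*}

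Combining \eqref{claimcv} and \eqref{claim0} of Lemma~\ref{pp} and cancelling the prefactor $(C/\lambda)^{(p+1)/2}$ yields $\int_{0}^{\infty}s^{p}g_{\lambda}(s)\,\dd s\sim\int_{1-\eps}^{1+\eps}s^{p}g_{\lambda}(s)\,\dd s$ as $\lambda\to\infty$, and analogously for $q$. Substituting these equivalences into the displayed bound gives
\begin{equation*}
\frac{1-\delta}{1+\delta}\,\leq\,\liminf_{\lambda\to\infty}\frac{\int_{0}^{\infty}s^{p}g_{\lambda}(s)\,\dd s}{\int_{0}^{\infty}s^{q}g_{\lambda}(s)\,\dd s}\,\leq\,\limsup_{\lambda\to\infty}\frac{\int_{0}^{\infty}s^{p}g_{\lambda}(s)\,\dd s}{\int_{0}^{\infty}s^{q}g_{\lambda}(s)\,\dd s}\,\leq\,\frac{1+\delta}{1-\delta},
\end{equation*}
and letting $\delta\to 0$ proves the claim. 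There is essentially no deep obstacle once Lemma~\ref{pp} is available; the only technicality is confirming the eventual positivity of $g_{\lambda}$ on $(1-\eps,1+\eps)$, which follows from \eqref{log} together with the fact that $\sqrt{C/\lambda}\,(1+\eps)\to 0$ as $\lambda\to\infty$.
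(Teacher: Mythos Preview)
Your proof is correct and follows essentially the same approach as the paper: localize via Lemma~\ref{pp} to $(1-\eps,1+\eps)$, bound the ratio there using that the power factors are close to $1$, and then let the window parameter shrink. The only cosmetic difference is that the paper bounds $s^{p-q}$ directly by $(1\pm\eps)^{p-q}$ (assuming $p\ge q$) rather than bounding $s^{p}$ and $s^{q}$ separately, but the structure and content are identical.
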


\begin{lemma}\label{ps}
Under the assumptions of Theorem~\ref{main}, we have that
\begin{align*}
\lim_{\lambda\to\infty}\lambda^{m/2}\frac{\int_{0}^{\infty}mt^{m-1} S_{{\s}}(t)F(t)\,\dd t}{\int_{0}^{\infty}f_{{\s}}(t)F(t)\,\dd t}
=0.
\end{align*}
\end{lemma}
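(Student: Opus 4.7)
The plan is to recast the numerator as an $s$-integral in the same scaled variable $t = a s$, with $a := \sqrt{C/\lambda}$, that Lemma~\ref{pp} uses for the denominator, then split this $s$-integral at a suitably small fixed $\delta$ into a bulk regime $s > \delta$ and a boundary regime $s \le \delta$, and bound each piece separately against the asymptotic form of the denominator. Concretely, writing $S_\s(t) = \Gamma(\beta, \lambda t)/\Gamma(\beta)$, using $F(t) = e^{-C/t} h(t)$ with $h$ as in \eqref{h}, and substituting $t = a s$ transforms the numerator into
\begin{align*}
\int_0^\infty m t^{m-1} S_\s(t) F(t)\,\dd t
= \frac{m a^m}{\Gamma(\beta)} \int_0^\infty s^{m-1} \Gamma(\beta, \sqrt{\lambda C}\,s)\, e^{-\sqrt{\lambda C}/s}\, h(a s)\,\dd s,
\end{align*}
while Lemma~\ref{pp} with $p = \beta - 1$ gives $\int_0^\infty f_\s(t) F(t)\,\dd t = \frac{(\lambda C)^{\beta/2}}{\Gamma(\beta)} J(\lambda)$, where $J(\lambda) := \int_0^\infty s^{\beta - 1} e^{-\sqrt{\lambda C}(s + 1/s)} h(a s)\,\dd s$.

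On the bulk $s > \delta$, the classical asymptotic $\Gamma(\beta, u) \sim u^{\beta - 1} e^{-u}$ as $u \to \infty$ yields $\Gamma(\beta, \sqrt{\lambda C}\,s) \le 2(\sqrt{\lambda C}\,s)^{\beta - 1} e^{-\sqrt{\lambda C}\,s}$ for $\lambda$ large. The integrand is then bounded by $2(\sqrt{\lambda C})^{\beta - 1} s^{m + \beta - 2} e^{-\sqrt{\lambda C}(s + 1/s)} h(a s)$, and Lemma~\ref{pq} (with $p = m + \beta - 2$ and $q = \beta - 1$) replaces the $s^{m + \beta - 2}$ factor by $s^{\beta - 1}$ up to a $(1 + o(1))$ factor, thereby reproducing $J(\lambda)$. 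After cancellation, the bulk contribution to the ratio is $\lesssim a^m/\sqrt{\lambda C} = C^{(m - 1)/2} \lambda^{-(m + 1)/2}$, which multiplied by $\lambda^{m/2}$ is $O(\lambda^{-1/2}) \to 0$.

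On the boundary $s \le \delta$, I would use only the trivial bound $\Gamma(\beta, u) \le \Gamma(\beta)$ together with the upper half of \eqref{log}: for any $\eta' > 0$ there exists $t_0 > 0$ with $h(t) \le e^{\eta'/t}$ on $(0, t_0]$. Taking $\eta' = C/2$ and $\lambda$ large enough that $a \delta < t_0$, the integrand is dominated by $s^{m - 1} e^{-\sqrt{\lambda C}/(2s)}$, which is increasing on $(0, \delta]$, yielding the boundary bound $a^m \delta^m e^{-\sqrt{\lambda C}/(2\delta)}$. The matching lower bound on the denominator comes from the same logarithmic asymptotic in the opposite direction: restricting $J$ to $s \in [1, 1 + \eps]$ and using $h(a s) \ge e^{-\eps C/(a s)}$ produces $J \ge c_\eps e^{-(2 + 2\eps) \sqrt{\lambda C}}$. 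Choosing $\eps$ small enough that $1/(2\delta) > 2 + 2\eps$, for instance $\delta = 1/6$ with $\eps = 1/4$, the boundary contribution to the ratio decays exponentially in $\sqrt{\lambda C}$, so $\lambda^{m/2}$ times it vanishes as well.

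The main obstacle is exactly this boundary piece. Unlike $f_\s(t)$, which is suppressed by $t^{\beta - 1} e^{-\lambda t}$ near $t = 0$, the survival function $S_\s(t)$ tends to $1$ as $t \to 0^+$, so all of the required smallness of the near-zero contribution must be extracted from $F(t)$ via \eqref{log}. Arranging the small parameters so that the decay rate $1/(2\delta)$ inherited from $F$'s super-exponential decay strictly beats the rate $\approx 2$ governing the denominator is the delicate balance, and it is exactly this constraint that forces $\delta$ to be bounded away from $1/4$.
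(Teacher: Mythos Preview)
Your proposal is correct and follows essentially the same route as the paper: the same change of variables $t=\sqrt{C/\lambda}\,s$, the same split of the $s$-integral into a near-zero piece (handled via $\Gamma(\beta,\cdot)\le\Gamma(\beta)$ together with $t\ln h(t)\to0$) and a bulk piece (handled via the large-argument asymptotic of the incomplete gamma function and then Lemma~\ref{pq}). The only cosmetic differences are that the paper uses the slightly weaker bound $\Gamma(\beta,\Lambda)\le\Lambda^{\beta-1+\delta}e^{-\Lambda}$ in place of your $\Gamma(\beta,\Lambda)\le2\Lambda^{\beta-1}e^{-\Lambda}$, and obtains the lower bound on the denominator by invoking the intermediate estimate \eqref{claim3} from the proof of Lemma~\ref{pp} rather than rederiving it as you do.
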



\begin{proof}[Proof of Theorem~\ref{main}]
By Lemma~\ref{convenient} and Lemma~\ref{ps}, we have that
\begin{align}\label{dm}
\begin{split}
&\lim_{\lambda\to\infty}\Big(\frac{\lambda}{C}\Big)^{m/2}\E[\tau^{m}\,|\,\tau<{\s}]
=\lim_{\lambda\to\infty}\Big(\frac{\lambda}{C}\Big)^{m/2}\frac{\int_{0}^{\infty} t^{m} f_{{\s}}(t)F(t)\,\dd t}{\int_{0}^{\infty}f_{{\s}}(t)F(t) \,\dd t}.
\end{split}
\end{align}
By \eqref{gamma} and Lemmas~\ref{pp} and \ref{pq}, we have that
\begin{align}\label{dom}
\begin{split}
&\lim_{\lambda\to\infty}\Big(\frac{\lambda}{C}\Big)^{\frac{m}{2}}\frac{\int_{0}^{\infty} t^{m} f_{{\s}}(t)F(t)\,\dd t}{\int_{0}^{\infty}f_{{\s}}(t)F(t) \,\dd t}
=\lim_{\lambda\to\infty}\Big(\frac{\lambda}{C}\Big)^{\frac{m}{2}}\frac{\int_{0}^{\infty} t^{m+{\beta}-1} e^{-\lambda t}F(t)\,\dd t}{\int_{0}^{\infty}t^{{\beta}-1}e^{-\lambda t}F(t) \,\dd t}
=1.
\end{split}
\end{align}
Combining \eqref{dm} and \eqref{dom} completes the proof.
\end{proof}

\subsection{Proofs of Corollary~\ref{cor} and Lemmas~\ref{convenient}-\ref{ps}}

We begin this subsection by proving Corollary~\ref{cor}.

\begin{proof}[Proof of Corollary~\ref{cor}]
By Theorem~\ref{main}, we have that
\begin{align}\label{varconv}
\begin{split}
\lambda\textup{Variance}[\tau\,|\,\tau<\s]
&=\lambda\big(\E[\tau^{2}\,|\,\tau<\s]-(\E[\tau\,|\,\tau<\s])^{2}\big)\\
&=C\frac{\E[\tau^{2}\,|\,\tau<\s]}{\frac{C}{\lambda}}-C\frac{(\E[\tau\,|\,\tau<\s])^{2}}{\frac{C}{\lambda}}
\to0\quad\text{as }\lambda\to\infty.
\end{split}
\end{align}
By Theorem~\ref{main}, 
\begin{align}\label{ee4}
\frac{\E[\tau\,|\,\tau<\s]}{\sqrt{C/\lambda}}\ge\frac{1}{2},\quad\text{for all sufficiently large $\lambda$}.
\end{align}
Let $\eps>0$. By \eqref{varconv}, we have that
\begin{align}\label{ee5}
\sqrt{\lambda}\sqrt{\textup{Variance}[\tau\,|\,\tau<\s]}<\eps,\quad\text{for all sufficiently large $\lambda$}.
\end{align}
Therefore, combining \eqref{ee4} and \eqref{ee5} gives
\begin{align*}
\frac{\sqrt{\textup{Variance}[\tau\,|\,\tau<\s]}}{\E[\tau\,|\,\tau<\s]}
\le\frac{2\eps}{\sqrt{C}},\quad\text{for all sufficiently large $\lambda$}.
\end{align*}
Since $\eps>0$ is arbitrary, the proof is complete.
\end{proof}


\begin{proof}[Proof of Lemma~\ref{convenient}]
The mean of any nonnegative random variable $Z\ge0$ is
\begin{align*}
\E[Z]
=\int_{0}^{\infty}\P(Z>z)\,\dd z.
\end{align*}
Since (i) $\tau^{m}1_{\tau<{\s}}\ge0$ and (ii) $\tau<{\s}$ if any only if $\tau^{m}<{\s}^{m}$, we therefore have that
\begin{align*}
\E[\tau^{m}1_{\tau<{\s}}]
=\E[\tau^{m}1_{\tau^{m}<{\s}^{m}}]
&=\int_{0}^{\infty}\P(\tau^{m}1_{\tau^{m}<{\s}^{m}}>t)\,\dd t
=\int_{0}^{\infty}\P(t^{1/m}<\tau<{\s})\,\dd t.
\end{align*}
Denote the survival probability of $\tau$ by
\begin{align*}
S(t)
:=\P(\tau>t)
=1-F(t).
\end{align*}
Since $\tau$ and ${\s}$ are independent, we condition on the value of ${\s}$ to obtain
\begin{align}
\E[\tau^{m}1_{\tau<{\s}}]
&=\int_{0}^{\infty}\int_{0}^{\infty}\P(t^{1/m}<\tau<s)f_{\s}(s)\,\dd s\,\dd t\nonumber\\
&=\int_{0}^{\infty}\int_{t^{1/m}}^{\infty}(S(t^{1/m})-S(s))f_{\s}(s)\,\dd s\,\dd t\nonumber\\
&=\int_{0}^{\infty}\int_{t^{1/m}}^{\infty}S(t^{1/m})f_{\s}(s)\,\dd s\,\dd t
-\int_{0}^{\infty}\int_{0}^{s^{m}}S(s)f_{\s}(s)\,\dd t\,\dd s.\label{ind}
\end{align}
Integrating $\int_{t^{1/m}}^{\infty}f_{\s}(s)\,\dd s=S_{\s}(t^{1/m})$ and $\int_{0}^{s^{m}}1\,\dd t=s^{m}$ in \eqref{ind} and changing variables $t=s^{m}$ then yields 
\begin{align*}
\E[\tau^{m}1_{\tau<{\s}}]
&=\int_{0}^{\infty}S(t^{1/m})S_{\s}(t^{1/m})\,\dd t
-\int_{0}^{\infty}s^{m}S(s)f_{\s}(s)\,\dd s\\
&=\int_{0}^{\infty}\big(S_{\s}(t^{1/m})-(1/m) t^{1/m}f_{\s}(t^{1/m})\big)S(t^{1/m})\,\dd t.
\end{align*}
In terms of $F(t)=1-S(t)$, we therefore have that
\begin{align}
\E[\tau^{m}1_{\tau<{\s}}]
&=\int_{0}^{\infty}\big(S_{\s}(t^{1/m})-(1/m) t^{1/m}f_{\s}(t^{1/m})\big)(1-F(t^{1/m}))\,\dd t\nonumber\\
&=\int_{0}^{\infty}(1/m) t^{1/m} f_{\s}(t^{1/m})F(t^{1/m})\,\dd t
-\int_{0}^{\infty}S_{\s}(t^{1/m})F(t^{1/m})\,\dd t\nonumber\\
&=\int_{0}^{\infty}t^{m} f_{\s}(t)F(t)\,\dd t
-\int_{0}^{\infty}mt^{m-1}S_{\s}(t)F(t)\,\dd t,\label{change}
\end{align}
where we have used that 
\begin{align}\label{ibp}
\int_{0}^{\infty}S_{\s}(t^{1/m})\,\dd t
=\frac{1}{m}\int_{0}^{\infty}t^{1/m}f_{\s}(t^{1/m})\,\dd t,
\end{align}
and changed variables $t\to t^{m}$ in \eqref{change}. The equality in \eqref{ibp} can be established by integration by parts or by noting that each side is equal to $\E[{\s}^{m}]$.

Using the independence of $\tau$ and ${\s}$ and conditioning on the value of ${\s}$ yields
\begin{align}\label{L}
\P(\tau<{\s})
=\int_{0}^{\infty}F(t)f_{\s}(t)\,\dd t.
\end{align}
Hence, combining the definition $\E[\tau^{m}\,|\,\tau<{\s}]
:=\frac{\E[\tau^{m}1_{\tau<{\s}}]}{\P(\tau<{\s})}$ with \eqref{change} and \eqref{L} completes the proof.
\end{proof}

\begin{proof}[Proof of Lemma~\ref{pp}]
Using the definition $h(t):=e^{C/t}F(t)$ and changing of variables $s=\sqrt{\lambda/C}t$ yields \eqref{claimcv}.

Fix $\eps>0$. It remains to show \eqref{claim0}. Define
\begin{align*}
g(t)
:=\ln(h(t))=C/t+\ln F(t),\quad
\phi(t)
:=t+1/t.
\end{align*}
Note that $\phi(1)=2$ is the minimum of $\phi(t)$ for all $t>0$ and that $\phi$ is strictly decreasing for $t\in(0,1)$ and strictly increasing for $t>1$. To prove \eqref{claim0}, observe first that
\begin{align}\label{ff}
\begin{split}
&\int_{1+\eps}^{\infty}s^{p}  e^{-\sqrt{\lambda C}(s+1/s)}h(\sqrt{C/\lambda}s)\,\dd s\\
&\quad=e^{-\sqrt{\lambda C}\delta_{1}}\int_{1+\eps}^{13/6}s^{p} e^{-\sqrt{\lambda C}[\phi(s)-\delta_{1}-\frac{1}{\sqrt{\lambda C}}g(\sqrt{C/\lambda}s)]}\,\dd s\\
&\qquad+e^{-\sqrt{\lambda C}\delta_{1}}\int_{13/6}^{\infty}s^{p} e^{-\sqrt{\lambda C}[\phi(s)-\delta_{1}-\frac{1}{\sqrt{\lambda C}}g(\sqrt{C/\lambda}s)]}\,\dd s,
\end{split}
\end{align}
where
\begin{align}\label{delta1}
\delta_{1}
:=\phi(1+\eps/2)=1+\eps/2+\frac{1}{1+\eps/2}\in(2,13/6),
\end{align}
since $\eps\in(0,1)$. To control the righthand side of \eqref{ff}, we bound the factor appearing in the exponent of the integrand. Specifically, to handle the first integral in the righthand side of \eqref{ff}, we claim that we may take $\lambda$ sufficiently large so that
\begin{align}\label{claim}
H_{1}
:=\phi(s)-\delta_{1}-\frac{1}{sC}\big[\sqrt{C/\lambda}sg(\sqrt{C/\lambda}s)\big]
>\nu
>0,\;\text{for all }s\in[1+\eps,13/6],
\end{align}
for some $\nu\in(0,1)$ that depends on $\eps$ but is independent of $\lambda$. To prove \eqref{claim}, note that \eqref{log} implies that $tg(t)\to0$ as $t\to0+$. Hence, for any $\eta>0$, we may take $\lambda$ sufficiently large so that for all $s\in[1+\eps,13/6]$, we have that
\begin{align}
H_{1}
\ge \phi(s)-\delta_{1} -\eta
&\ge\phi(1+\eps)-\delta_{1}-\eta.\label{lb}
\end{align}
Taking $\eta\in(0,\nu)$ ensures that the lower bound in \eqref{lb} is strictly greater than $\nu$ if
\begin{align*}
\nu
=\tfrac{1}{2}(\phi(1+\eps)-\delta_{1})>0,
\end{align*}
which verifies \eqref{claim}. To handle the last integral in \eqref{ff}, note that
\begin{align*}
tg(t)=C+t\ln F(t)\le C,\quad\text{for all }t>0,
\end{align*}
since $F(t)\in(0,1]$ for all $t>0$. Hence,
\begin{align*}
H_{1}
\ge \phi(s)-\delta_{1}-1/s
=s-\delta_{1}
\ge s-13/6.
\end{align*}
Therefore, we conclude from \eqref{ff} that for sufficiently large $\lambda$,
\begin{align*}
&\int_{1+\eps}^{\infty}s^{p}  e^{-\sqrt{\lambda C}(s+1/s)}h(\sqrt{C/\lambda}s)\,\dd s\\
&\quad\le e^{-\sqrt{\lambda C}\delta_{1}}\Big(\int_{1+\eps}^{13/6}s^{p} e^{-\sqrt{\lambda C}\nu}\,\dd s
+\int_{13/6}^{\infty}s^{p} e^{-\sqrt{\lambda C}[s-13/6]}\,\dd s\Big)
\le e^{-\sqrt{\lambda C}\delta_{1}}.
\end{align*}

Next, observe that
\begin{align}\label{ff2}
\begin{split}
&\int_{0}^{1-\eps}s^{p} e^{-\sqrt{\lambda C}(s+1/s)}h(\sqrt{C/\lambda}s)\,\dd s\\
&\quad=e^{-\sqrt{\lambda C}\delta_{2}}\int_{0}^{1-\eps}s^{p}e^{-\sqrt{\lambda C}[\phi(s)-\delta_{2}-\frac{1}{\sqrt{\lambda C}}g(\sqrt{C/\lambda}s)]}\,\dd s,
\end{split}
\end{align}
where
\begin{align}\label{delta2}
\delta_{2}
:=\phi(1-\eps/2)
=1-\eps/2+\frac{1}{1-\eps/2}\in(2,5/2),
\end{align}
since $\eps\in(0,1)$. Similar to \eqref{claim}, we now claim that
\begin{align}\label{claim2}
H_{2}:=\phi(s)-\delta_{2}-\frac{1}{sC}\big[\sqrt{C/\lambda}sg(\sqrt{C/\lambda}s)\big]>0,\quad\text{for all }s\in(0,1-\eps].
\end{align}
Recall that \eqref{log} implies $tg(t)\to0$ as $t\to0+$, and let
\begin{align}\label{eta}
\eta
=\tfrac{1}{2}(\phi(1-\eps)-\delta_{2})(1-\eps)C\in(0,C(1-(1-\eps)^{2})),
\end{align}
which ensures $1-\eps<\sqrt{1-\eta/C}$. Hence, we may take $\lambda$ sufficiently large so that for all $s\in(0,1-\eps]\subset(0,\sqrt{1-\eta/C})$,
\begin{align*}
H_{2}
\ge \phi(s)-\delta_{2}-\eta/(sC)
\ge \phi(1-\eps)-\eta/((1-\eps)C)-\delta_{2}>0,
\end{align*}
by the choice of $\delta_{2}$ in \eqref{delta2}, $\eta$ in \eqref{eta}, and the fact that $\phi(s)-\eta/(sC)$ is strictly decreasing for $s\in(0,\sqrt{1-\eta/C})$. Hence, \eqref{claim2} is verified. Hence, \eqref{ff2} implies
\begin{align*}
\int_{0}^{1-\eps}s^{p} e^{-\sqrt{\lambda C}(s+1/s)}h(\sqrt{C/\lambda}s)\,\dd s
&\le e^{-\sqrt{\lambda C}\delta_{2}}\int_{0}^{1-\eps}s^{p}\,\dd s\quad\text{for sufficiently large }\lambda.
\end{align*}

In order to verify \eqref{claim0}, it remains to show that
\begin{align}\label{claim3}
\lim_{\lambda\to\infty}\frac{e^{-\sqrt{\lambda C}\min\{\delta_{1},\delta_{2}\}}}{\int_{1-\eps}^{1+\eps}s^{p} e^{-\sqrt{\lambda C}(s+1/s)}h(\sqrt{C/\lambda}s)\,\dd s}
=0
\end{align}
First observe that
\begin{align*}
\int_{1-\eps}^{1+\eps}s^{p} e^{-\sqrt{\lambda C}(s+1/s)}h(\sqrt{C/\lambda}s)\,\dd s
&\ge e^{-\sqrt{\lambda C}\delta_{3}}\int_{1-\eps/4}^{1+\eps/4}s^{p} e^{g(\sqrt{C/\lambda}s)}\,\dd s,
\end{align*}
where
\begin{align}\label{delta3}
\delta_{3}
:=\max\{\phi(1+\eps/4),\phi(1-\eps/4)\}.
\end{align}
Recall that $tg(t)\to0$ as $t\to0+$ by \eqref{log}. Hence, if $\eta>0$, then for large $\lambda$,
\begin{align*}
\sqrt{C/\lambda}sg(\sqrt{C/\lambda}s)\ge-\eta\quad\text{for all }s\in[1-\eps/4,1+\eps/4].
\end{align*}
Hence,
\begin{align*}
\int_{1-\eps/4}^{1+\eps/4}s^{p} e^{g(\sqrt{C/\lambda}s)}\,\dd s
&=\int_{1-\eps/4}^{1+\eps/4}s^{p} e^{\sqrt{\lambda/C}s^{-1}[\sqrt{C/\lambda}sg(\sqrt{C/\lambda}s)]}\,\dd s\\
&\ge\int_{1-\eps/4}^{1+\eps/4}s^{p} e^{-\sqrt{\lambda/C}s^{-1}\eta}\,\dd s\\
&\ge e^{-\sqrt{\lambda C}(1-\eps/4)^{-1}\eta/C}\int_{1-\eps/4}^{1+\eps/4}s^{p} \,\dd s.
\end{align*}
Hence, in order to prove \eqref{claim3}, it is sufficient to have that
\begin{align}\label{suff}
\delta_{3}+(1-\eps/4)^{-1}\eta/C<\min\{\delta_{1},\delta_{2}\}.
\end{align}
Therefore, we may take
\begin{align*}
\eta
=\tfrac{1}{2}(\min\{\delta_{1},\delta_{2}\}-\delta_{3})(1-\eps/4)C>0,
\end{align*}
by our choice of $\delta_{1}$, $\delta_{2}$, and $\delta_{3}$ in \eqref{delta1}, \eqref{delta2}, and \eqref{delta3}. Hence, \eqref{claim3} is verified and therefore \eqref{claim0} is verified.
\end{proof}


\begin{proof}[Proof of Lemma~\ref{pq}]
Let $\eps\in(0,1)$. By Lemma~\ref{pp}, we have that
\begin{align}\label{oo}
\begin{split}
&\frac{\int_{0}^{\infty}s^{p} e^{-\sqrt{\lambda C}(s+1/s)}h(\sqrt{C/\lambda}s)\,\dd s}{\int_{0}^{\infty}s^{q} e^{-\sqrt{\lambda C}(s+1/s)}h(\sqrt{C/\lambda}s)\,\dd s}
\sim\frac{\int_{1-\eps}^{1+\eps}s^{p} e^{-\sqrt{\lambda C}(s+1/s)}h(\sqrt{C/\lambda}s)\,\dd s}{\int_{1-\eps}^{1+\eps}s^{q} e^{-\sqrt{\lambda C}(s+1/s)}h(\sqrt{C/\lambda}s)\,\dd s}
\quad\text{as }\lambda\to\infty.
\end{split}
\end{align}
Without loss of generality, assume $p\ge q$. Hence, for any value of $\lambda>0$, we have that
\begin{align*}
&\frac{\int_{1-\eps}^{1+\eps}s^{p} e^{-\sqrt{\lambda C}(s+1/s)}h(\sqrt{C/\lambda}s)\,\dd s}{\int_{1-\eps}^{1+\eps}s^{q} e^{-\sqrt{\lambda C}(s+1/s)}h(\sqrt{C/\lambda}s)\,\dd s}\\
&\quad\le\frac{(1+\eps)^{p-q}\int_{1-\eps}^{1+\eps}s^{q}e^{-\sqrt{\lambda C}(s+1/s)}h(\sqrt{C/\lambda}s)\,\dd s}{\int_{1-\eps}^{1+\eps}s^{q}e^{-\sqrt{\lambda C}(s+1/s)}h(\sqrt{C/\lambda}s)\,\dd s}
=(1+\eps)^{p-q},
\end{align*}
and similarly,
\begin{align*}
&\frac{\int_{1-\eps}^{1+\eps}s^{p} e^{-\sqrt{\lambda C}(s+1/s)}h(\sqrt{C/\lambda}s)\,\dd s}{\int_{1-\eps}^{1+\eps}s^{q} e^{-\sqrt{\lambda C}(s+1/s)}h(\sqrt{C/\lambda}s)\,\dd s}
\ge(1-\eps)^{p-q}.
\end{align*}
Therefore,
\begin{align}\label{supinf}
\begin{split}
&\limsup_{\lambda\to\infty}\frac{\int_{1-\eps}^{1+\eps}s^{p} e^{-\sqrt{\lambda C}(s+1/s)}h(\sqrt{C/\lambda}s)\,\dd s}{\int_{1-\eps}^{1+\eps}s^{q}e^{-\sqrt{\lambda C}(s+1/s)}h(\sqrt{C/\lambda}s)\,\dd s}
\le(1+\eps)^{p-q},\\
&\liminf_{\lambda\to\infty}\frac{\int_{1-\eps}^{1+\eps}s^{p} e^{-\sqrt{\lambda C}(s+1/s)}h(\sqrt{C/\lambda}s)\,\dd s}{\int_{1-\eps}^{1+\eps}s^{q}e^{-\sqrt{\lambda C}(s+1/s)}h(\sqrt{C/\lambda}s)\,\dd s}
\ge(1-\eps)^{p-q}.
\end{split}
\end{align}
Since $\eps\in(0,1)$ was arbitrary, combining \eqref{supinf} with \eqref{oo} completes the proof.
\end{proof}


\begin{proof}[Proof of Lemma~\ref{ps}]
Using \eqref{gamma}, changing variables $s=\sqrt{\lambda/C}t$, and recalling $h(t):=e^{C/t}F(t)$ gives
\begin{align*}
&\int_{0}^{\infty}mt^{m-1} S_{{\s}}(t)F(t)\,\dd t
=\int_{0}^{\infty}mt^{m-1} \frac{\Gamma({\beta},\lambda t)}{\Gamma({\beta})}e^{-C/t}h(t)\,\dd t\\
&\quad=\frac{m}{\Gamma({\beta})}\Big(\frac{C}{\lambda}\Big)^{m/2}\int_{0}^{\infty}s^{m-1} \Gamma({\beta},\sqrt{\lambda C}s)e^{-\sqrt{\lambda C}/s}h(\sqrt{C/\lambda}s)\,\dd s.
\end{align*}
Let $\delta\in(0,1)$. It is straightforward to show that we have the following upper bound on the incomplete gamma function,
\begin{align}\label{ub}
\Gamma({\beta},\Lambda)\le\Lambda^{{\beta}-1+\delta}e^{-\Lambda}\quad\text{for $\Lambda$ sufficiently large}.
\end{align}
Fix $\eps\in(0,1/6)$. It follows from \eqref{ub} that we may take $\lambda$ sufficiently large so that
\begin{align*}
&\int_{\eps}^{\infty}s^{m-1} \Gamma({\beta},\sqrt{\lambda C}s)e^{-\sqrt{\lambda C}/s}h(\sqrt{C/\lambda}s)\,\dd s\\
&\quad\le(\sqrt{\lambda C})^{{\beta}-1+\delta}\int_{0}^{\infty}s^{m-1+{\beta}-1+\delta} e^{-\sqrt{\lambda C}[s+1/s]}h(\sqrt{C/\lambda}s)\,\dd s.
\end{align*}

Recalling that $h(t)=e^{g(t)}$, we have that
\begin{align}\label{zz2}
\begin{split}
&\int_{0}^{\eps}s^{m-1} \Gamma({\beta},\sqrt{\lambda C}s)e^{-\sqrt{\lambda C}/s}h(\sqrt{C/\lambda}s)\,\dd s\\
&\quad\le\Gamma({\beta})\int_{0}^{\eps}s^{m-1} e^{-\sqrt{\lambda/C} s^{-1}[C-\sqrt{C/\lambda}sg(\sqrt{C/\lambda}s)]}\,\dd s.
\end{split}
\end{align}
Now, by the assumption in \eqref{log}, we have that $tg(t)\to0$ as $t\to0+$. Therefore, we may take $\lambda$ sufficiently large so that
\begin{align}\label{zz3}
C-\sqrt{C/\lambda}sg(\sqrt{C/\lambda}s)
>C/2>0\quad\text{for all }s\in[0,\eps].
\end{align}
Therefore, combining \eqref{zz2} and \eqref{zz3} yields
\begin{align*}
\int_{0}^{\eps}s^{m-1} \Gamma({\beta},\sqrt{\lambda C}s)e^{-\sqrt{\lambda C}/s}h(\sqrt{C/\lambda}s)\,\dd s
&\le\Gamma({\beta})\int_{0}^{\eps}s^{m-1} e^{-\frac{1}{2}\sqrt{\lambda C} s^{-1}}\,\dd s\\
&\le\Gamma({\beta})e^{-\frac{1}{2}\sqrt{\lambda C} \eps^{-1}}\int_{0}^{\eps}s^{m-1} \,\dd s.
\end{align*}

Therefore,
\begin{align}\label{terms}
\begin{split}
&\frac{\int_{0}^{\infty}mt^{m-1} S_{{\s}}(t)F(t)\,\dd t}{\int_{0}^{\infty}f_{{\s}}(t)F(t)\,\dd t}\\
&\le\frac{m}{\Gamma({\beta})}\Big(\frac{C}{\lambda}\Big)^{m/2}(\sqrt{\lambda C})^{{\beta}-1+\delta}\frac{\int_{0}^{\infty}s^{m-1+{\beta}-1+\delta} e^{-\sqrt{\lambda C}[s+1/s]}h(\sqrt{C/\lambda}s)\,\dd s}{\int_{0}^{\infty}f_{{\s}}(t)F(t)\,\dd t}\\
&\quad+m\Big(\frac{C}{\lambda}\Big)^{m/2}\frac{e^{-\frac{1}{2}\sqrt{\lambda C} \eps^{-1}}\int_{0}^{\eps}s^{m-1} \,\dd s}{\int_{0}^{\infty}f_{{\s}}(t)F(t)\,\dd t}.
\end{split}
\end{align}
Working on the second term in the righthand side of \eqref{terms}, we multiply by $\lambda^{m/2}$, change variables $s=\sqrt{\lambda/C}t$, and use \eqref{gamma} to obtain
\begin{align}\label{nn}
\begin{split}
&\frac{mC^{m/2}e^{-\frac{1}{2}\sqrt{\lambda C} \eps^{-1}}\int_{0}^{\eps}s^{m-1} \,\dd s}{\int_{0}^{\infty}f_{{\s}}(t)F(t)\,\dd t}
=\lambda^{-{\beta}}\Big(\frac{\lambda}{C}\Big)^{{\beta}/2}\frac{mC^{m/2}e^{-\frac{1}{2}\sqrt{\lambda C} \eps^{-1}}\int_{0}^{\eps}s^{m-1} \,\dd s}{\int_{0}^{\infty}s^{{\beta}-1}e^{-\sqrt{\lambda C}[s+1/s]}h(\sqrt{C/\lambda}s)\,\dd s}
\end{split}
\end{align}
Since $\eps\in(0,1/6)$ and $\max\{\delta_{1},\delta_{2}\}<3<\tfrac{1}{2}\eps^{-1}$ (recall the definitions of $\delta_{1}$ and $\delta_{2}$ in \eqref{delta1} and \eqref{delta2}), it follows from \eqref{claim3} and \eqref{nn} that
\begin{align}\label{easy}
\lim_{\lambda\to\infty}mC^{m/2}\frac{e^{-\frac{1}{2}\sqrt{\lambda C} \eps^{-1}}\int_{0}^{\eps}s^{m-1} \,\dd s}{\int_{0}^{\infty}f_{{\s}}(t)F(t)\,\dd t}=0.
\end{align}

Moving to the first term in the righthand side of \eqref{terms}, we change variables $s=\sqrt{\lambda/C}t$, recall \eqref{gamma}, and use Lemma~\ref{pq} to obtain
\begin{align}\label{lessroom}
\begin{split}
&\frac{m}{\Gamma({\beta})}\Big(\frac{C}{\lambda}\Big)^{m/2}(\sqrt{\lambda C})^{{\beta}-1+\delta}\frac{\int_{0}^{\infty}s^{m-1+{\beta}-1+\delta} e^{-\sqrt{\lambda C}[s+1/s]}h(\sqrt{C/\lambda}s)\,\dd s}{\int_{0}^{\infty}f_{{\s}}(t)F(t)\,\dd t}\\
&=m\Big(\frac{C}{\lambda}\Big)^{m/2}(\sqrt{\lambda C})^{{\beta}-1+\delta}\lambda^{-{\beta}}\Big(\frac{\lambda}{C}\Big)^{{\beta}/2}\frac{\int_{0}^{\infty}s^{m-1+{\beta}-1+\delta} e^{-\sqrt{\lambda C}[s+1/s]}h(\sqrt{C/\lambda}s)\,\dd s}{\int_{0}^{\infty}s^{{\beta}-1}e^{-\sqrt{\lambda C}[s+1/s]}h(\sqrt{C/\lambda}s)\,\dd s}\\
&\sim m\Big(\frac{C}{\lambda}\Big)^{m/2}(\sqrt{\lambda C})^{{\beta}-1+\delta}\lambda^{-{\beta}}\Big(\frac{\lambda}{C}\Big)^{{\beta}/2}
=mC^{(m-1+\delta)/2}\lambda^{-(m+1-\delta)/2}\quad\text{as }\lambda\to\infty.
\end{split}
\end{align}
Since $\delta\in(0,1)$, combining \eqref{terms} with \eqref{easy} and \eqref{lessroom} completes the proof.
\end{proof}

\section{Examples}\label{examples}

If the distribution of $\tau$ has the following short time behavior,
\begin{align}\label{log2}
\lim_{t\to0+}t\ln\P(\textcolor{black}{\tau\le t})=-C<0,
\end{align}
then Theorem~\ref{main} reveals the behavior of the $m$th conditional moment for every $m\ge1$,
\begin{align*}
\E[\tau^{m}\,|\,\tau<\s]
\sim\left(\frac{C}{\lambda}\right)^{m/2}\quad\text{as }\lambda\to\infty,
\end{align*}
where $\s$ is an independent Gamma random variable with shape $\beta>0$ and rate $\lambda>0$  (if ${\beta}=1$, then $\s$ is exponential). 

In this section, we show that \eqref{log2} is remarkably universal for FPTs of diffusion. Further, we identify the constant $C$ as
\begin{align}\label{C2}
C=\frac{L^{2}}{4D}>0,
\end{align}
where $D>0$ is a characteristic diffusion coefficient and $L>0$ is a certain geodesic distance between the possible searcher starting locations and the target. Therefore, Theorem~\ref{main} implies that
\begin{align}\label{formula2}
\E[\tau^{m}\,|\,\tau<\s]
\sim\left(\frac{L}{2\sqrt{D\lambda}}\right)^{m}\quad\text{as }\lambda\to\infty.
\end{align}
Our approach in this section follows reference \cite{lawley2020uni}, in which the short time behavior in \eqref{log2} was studied in the context of fastest FPTs.


Let $\{X(t)\}_{t\ge0}$ be a ${{d}}$-dimensional diffusion process (i.e.\ the ``searcher'') on a manifold $M$. Let $\tau>0$ be the FPT to a ``target'' $U_{\text{T}}\subset M$,
\begin{align}\label{tau}
\tau
:=\inf\{t>0:X(t)\in U_{\text{T}}\}.
\end{align}
Note that $U_{\text{T}}$ could consist of multiple regions, and thus includes the case of what might be considered ``multiple targets.'' Let ${{\s}}>0$ be an independent gamma random variable (i.e.\ the ``inactivation time'') with rate $\lambda>0$ and shape ${\beta}>0$ (if ${\beta}=1$, then $\s$ is exponential).

Suppose the target $U_{\text{T}}$ is the closure of its interior, which precludes trivial cases such as the target being a single point. Suppose the initial distribution of $X$ has compact support $U_{0}\subset M$ that does not intersect the target,
\begin{align}\label{away}
U_{0}\cap U_{\text{T}}=\varnothing.
\end{align}
For example, the initial distribution could be a Dirac delta function at a single point,
\begin{align*}
X(0)=x_{0}=U_{0}\in M,\quad\text{if }x_{0}\notin U_{\text{T}}.
\end{align*}
As another example, the initial distribution could be uniform on $U_{0}$ if the closed set $U_{0}$ satisfies \eqref{away}. Note that \eqref{away} ensures that the searcher cannot start arbitrarily close to the target.


\subsection{Simple $d$-dimensional diffusion}\label{section pure}

Consider first the case of pure diffusion in $M=\R^{{d}}$ with diffusivity $D>0$. In \cite{lawley2020uni}, it was shown that the distribution of $\tau$ satisfies \eqref{log2} with $C$ given in \eqref{C2}, where $L$ is the shortest distance from the starting locations, $U_{0}$, to the target, $U_{\text{T}}$,
\begin{align}\label{dset}
L
=\inf_{x_{0}\in U_{0},{{x}}\in U_{\text{T}}}\deuc({x_{0}},{{x}})>0,
\end{align}
where $\deuc({x_{0}},{{x}})$ is the standard Euclidean length between two points in $\R^{d}$,
\begin{align}\label{euc}
\deuc({x_{0}},{{x}})
:=\|x_{0}-x\|,\quad x_{0},x\in\R^{d}.
\end{align}
Hence, Theorem~\ref{main} ensures that the conditional $m$th moment of $\tau$ satisfies \eqref{formula2} with the length $L$ in \eqref{dset}.

\subsection{Space-dependent drift and diffusivity}\label{dd}

Suppose the searcher follows the It\^{o} stochastic differential equation on $M=\R^{{d}}$,
\begin{align}\label{sde}
\begin{split}
\dd X
&=b(X)\,\dd t+\sqrt{2D}{\Sigma}(X)\,\dd W.
\end{split}
\end{align}
In \eqref{sde}, $b:\R^{{d}}\to\R^{{d}}$ is the space-dependent drift that describes a deterministic force on the searcher, ${\Sigma}:\R^{{d}}\to\R^{{{d}}\times n}$ is a dimensionless function that describes any space-dependence or anisotropy in the diffusion, $D>0$ is a characteristic diffusion coefficient, and $W(t)\in\R^{n}$ is a standard $n$-dimensional Brownian motion. Following \cite{lawley2020uni}, assume that $\R^{{d}}\backslash U_{\text{T}}$ is bounded and that $b$ and ${\Sigma}$ satisfy some mild assumptions ($b$ is uniformly bounded and uniformly Holder continuous and ${\Sigma}{\Sigma}^{T}$ is uniformly Holder continuous and its eigenvalues lie in a finite interval $(\alpha_{1},\alpha_{2})$ with $\alpha_{1}>0$).

Given a smooth path $\omega:[0,1]\to M$, define its length in the Riemannian metric defined by the inverse of the diffusivity matrix $a:={\Sigma}{\Sigma}^{T}$,
\begin{align}\label{ll}
l(\omega)
:=\int_{0}^{1}\sqrt{\dot{\omega}^{T}(s)a^{-1}(\omega(s))\dot{\omega}(s)}\,\dd s.
\end{align}
Define the geodesic distance between any two points in $\R^{d}$,
\begin{align}\label{drie}
\begin{split}
\drie({x_{0}},{{x}})
&:=\inf\{l(\omega):\omega(0)=x_{0},\,\omega(1)=x\},\quad x_{0},x\in\R^{d},
\end{split}
\end{align}
where the infimum is over all smooth paths $\omega:[0,1]\to M$ from $\omega(0)={x_{0}}$ to $\omega(1)={{x}}$. In words, $\drie(x_{0},x)$ is the length of the optimal path from $x_{0}$ to $x$, where paths incur a cost for traveling through regions of slow diffusion. Notice that $\drie$ does not depend on the drift term in \eqref{sde}. Also notice that $\drie$ reduces to the Euclidean length $\deuc$ in \eqref{euc} if $a$ is the identity matrix (i.e.\ for isotropic, spatially constant diffusion).

Using Varadhan's formula~\cite{varadhan1967}, it was shown in \cite{lawley2020uni} that the distribution of $\tau$ satisfies \eqref{log2} with $C$ given in \eqref{C2}, where $L$ is
\begin{align}\label{dset2}
L
=\inf_{x_{0}\in U_{0},{{x}}\in U_{\text{T}}}\drie({x_{0}},{{x}})>0.
\end{align}
Hence, Theorem~\ref{main} ensures that the conditional $m$th moment of $\tau$ satisfies \eqref{formula2} with the length $L$ in \eqref{dset2}.

Counterintuitively, this reveals that the conditional FPT moments are completely independent of the drift $b$ in \eqref{sde} in the fast inactivation limit. Furthermore, this reveals how conditional FPTs depend on heterogeneous diffusion. Indeed, it shows that searchers which reach the target before a fast inactivation time avoid regions of slow diffusivity.

\subsection{Diffusion on a manifold with reflecting obstacles}\label{manifold}

Let $M$ be a ${{d}}$-dimensional smooth Riemannian manifold. We are most interested in the case that $M$ is a set in $\R^{3}$ with smooth outer and inner boundaries, which could model the cytosolic space bounded by the cell membrane (the outer boundary) and containing reflecting organelle obstacles (the inner boundaries). See Figure~\ref{figschem} for a 2-dimensional illustration. Another example of interest is the case where $M$ is the 2-dimensional surface of a 3-dimensional spheroid, which could model processes on a membrane.

Let $\{X(t)\}_{t\ge0}$ be a diffusion process on $M$ described by its generator $\L$, which in each coordinate chart is a second order differential operator of the form
\begin{align*}
\L f
=D\sum_{i,j=1}^{n}\frac{\partial}{\partial x_{i}}\Big(a_{ij}(x)\frac{\partial f}{\partial x_{j}}\Big),
\end{align*}
where the matrix $a=\{a_{ij}\}_{i,j=1}^{n}$ satisfies mild conditions (in each chart, $a$ is symmetric, continuous, and its eigenvalues lie in a finite interval $(\alpha_{1},\alpha_{2})$ with $\alpha_{1}>0$). Assume the diffusion reflects from the boundary of $M$ (if $M$ has a boundary) and assume $M$ is connected and compact.

Using \cite{norris1997}, it was shown in \cite{lawley2020uni} that the distribution of $\tau$ satisfies \eqref{log2} with $C$ given in \eqref{C2}, where $L$ is given by \eqref{dset2}. Hence, Theorem~\ref{main} ensures that the conditional $m$th moment of $\tau$ satisfies \eqref{formula2} with the length $L$ in \eqref{dset2}.

Therefore, searchers which reach the target before a fast inactivation time follow the shortest path to the target while avoiding obstacles. Notice that the infimum in \eqref{drie} is over paths in $M$, and thus paths that intersect obstacles are ineligible. 

\subsection{Partially absorbing targets}\label{partially}

In the preceding examples, the FPT of interest was the first time the searcher reached the target (see \eqref{tau}). In the literature, such targets are called ``perfectly absorbing,'' because it is envisioned that the searcher is ``absorbed'' immediately upon contact with the target.

However, targets are often modeled as \emph{partially absorbing} \cite{grebenkov2006, erban07}. The point of this subsection is to show that a partially absorbing target has no effect on conditional FPTs. That is, conditional FPT moments for partially absorbing targets are identical to conditional FPT moments for perfectly absorbing targets for fast inactivation.

Mathematically, the FPT for a searcher to be absorbed at a partially absorbing target is defined as
\begin{align}\label{taukappa}
\tau_{\kappa}
:=\inf\{t>0:l(t)>\xi_{\kappa}\},
\end{align}
where $\xi_{\kappa}$ is an independent exponential random variable with rate $\kappa>0$ and $l(t)$ is the so-called local time of $X(t)$ on the target $U_{\text{T}}$ \cite{grebenkov2006} (note that $l(t)$ has units of time/length and $\kappa$ has units of length/time). Equivalently, the backward Kolmogorov equation that describes the time evolution of the survival probability,
\begin{align}\label{skappa}
S(x,t):=\P(\tau_{\kappa}>t\,|\,X(0)=x),
\end{align}
for a partially absorbing target has a Robin boundary condition on the target involving the parameter $\kappa>0$.

To briefly illustrate, suppose the searcher diffuses on the positive real line with a partially absorbing target at the origin. The survival probability \eqref{skappa} satisfies
\begin{align}\label{kappabvp}
\begin{split}
\tfrac{\partial}{\partial t}S
&=D\tfrac{\partial^{2}}{\partial x^{2}}S,\quad x>0,\;t>0,\\
D\tfrac{\partial}{\partial x}S
&=\kappa S,\quad x=0,
\end{split}
\end{align}
with initial condition, $S(x,0)=1$. The solution to \eqref{kappabvp} is
\begin{align}\label{Skappa}
S(x,t)
=1-\text{erfc}\big(\tfrac{L}{\sqrt{4D t}}\big)+e^{\frac{\kappa  (\kappa  t+L)}{D}} \text{erfc}\big(\tfrac{2 \kappa  t+L}{\sqrt{4D t}}\big).
\end{align}
If the target was perfectly absorbing, then the boundary condition in \eqref{kappabvp} is pure Dirichlet (i.e.\ $\kappa=\infty$), and the solution reduces to $S_{\infty}(x,t):=1-\text{erfc}(\frac{L}{\sqrt{4D t}})$. Using these formulas, a straightforward calculation shows that \cite{lawley2020uni}
\begin{align}\label{logkappa}
\lim_{t\to0+}t\ln(1-S(L,t))
=\lim_{t\to0+}t\ln(1-S_{\infty}(L,t))
=-\frac{L^{2}}{4D}<0,\quad\text{if }L>0.
\end{align}
Therefore, the conditional $m$th moment of $\tau_{\kappa}$ satisfies \eqref{formula2} where $L>0$ is merely the distance from the starting location to the target. This leading order behavior of $\tau_{\kappa}$ is independent of $\kappa$ and is, in particular, the same for a partially absorbing target ($\kappa\in(0,\infty)$) as for a perfectly absorbing target ($\kappa=\infty$). 

The simple one-dimensional result in \eqref{logkappa} was extended to much more general situations in \cite{lawley2020uni}. More precisely, consider pure diffusion in a smooth bounded domain in $\R^{d}$ where the target is any finite disjoint union of hyperspheres. Let $\tau_{\kappa}$ be the FPT for the searcher to be absorbed in the case that the target is partially absorbing (i.e.\ $\tau_{\kappa}$ as in \eqref{taukappa}), and let $\tau$ be the FPT for the searcher to be absorbed in the case that the target is perfectly absorbing (i.e.\ $\tau$ as in \eqref{tau}). In this case, Ref.~\cite{lawley2020uni} proved that
\begin{align*}
\lim_{t\to0+}t\ln\P(\textcolor{black}{\tau_{\kappa}\le t})
=\lim_{t\to0+}t\ln\P(\textcolor{black}{\tau\le t}).
\end{align*}
Therefore, Theorem~\ref{main} reveals that the conditional FPT moments are unaffected by a partially absorbing target ($\kappa<\infty$) compared to a perfectly absorbing target ($\kappa=\infty$) in the fast inactivation limit.

{\black In fact, for the one-dimensional problem in \eqref{kappabvp}-\eqref{Skappa}, some conditional FPT moments can be calculated explicitly if the inactivation time $\sigma$ is exponentially distributed with rate $\lambda>0$ (i.e.\ $\beta=1$). In this case, a direct calculation using \eqref{ratio0} and \eqref{Skappa} yields
\begin{align}\label{2exact}
\begin{split}
\E[\tau\,|\,\tau<{\s}]
&=\left(\frac{L^{2}}{D}\right)\frac{\overline{\kappa} +\sqrt{\overline{\lambda} }+1}{2 (\overline{\kappa}  \sqrt{\overline{\lambda} }+\overline{\lambda} )},\\
\E[\tau^{2}\,|\,\tau<{\s}]
&=\left(\frac{L^{2}}{D}\right)^{2}\frac{(2 \overline{\kappa} +3) \overline{\lambda} +(\overline{\kappa} +1) (\overline{\kappa} +3) \sqrt{\overline{\lambda} }+\overline{\kappa}  (\overline{\kappa} +1)+\overline{\lambda} ^{3/2}}{4 \overline{\lambda} ^{3/2} (\overline{\kappa} +\sqrt{\overline{\lambda} })^2},
\end{split}
\end{align}
where we have defined the dimensionless inactivation rate and target reactivity,
\begin{align*}
\overline{\lambda}
:=\frac{\lambda L^{2}}{D},\quad
\overline{\kappa}
:=\frac{\kappa L}{D}.
\end{align*}
From \eqref{2exact}, we also obtain the coefficient of variation,
\begin{align}\label{cvexact}
\frac{\sqrt{\textup{Variance}[\tau\,|\,\tau<\s]}}{\E[\tau\,|\,\tau<\s]}
=\frac{\sqrt{2 (\overline{\kappa} +1) \sqrt{\overline{\lambda} }+\overline{\kappa}  (\overline{\kappa} +1)+\overline{\lambda} }}{{\overline{\lambda} }^{1/4} (\overline{\kappa} +\sqrt{\overline{\lambda} }+1)}.
\end{align}
Note that taking $\lambda\to\infty$ in \eqref{2exact} and \eqref{cvexact} agrees with Theorem~\ref{main} and Corollary~\ref{cor}. Note also that the formulas in \eqref{2exact}-\eqref{cvexact} diverge in the limit $\lambda\to0+$, which reflects the fact that the unconditioned mean FPT is infinite for this problem on a semi-infinite spatial domain.
}

\section{Discussion}

\subsection{Confirmation of conjecture of Reference \cite{ma2020}}

Consider the case of pure diffusion in a domain in $\R^{3}$ with smooth outer and inner boundaries (obstacles) as in Figure~\ref{figschem}a, which fits into the framework of section~\ref{manifold} above. Let $\tau_{\phys}$ denote the FPT for the searcher to reach the target, where the subscript emphasizes that this corresponds to the physiological case in which the searcher diffuses in the presence of reflecting obstacles within the domain. Let $\tau_{\no}$ denote the FPT for the searcher to reach the target in the case of no obstacles within the domain. That is, $\tau_{\no}$ corresponds to the same setup as $\tau_{\phys}$, except that we delete all the interior boundaries, see Figure~\ref{figschem}b.

In Reference \cite{ma2020}, it was conjectured that 
\begin{align}\label{conj}
\frac{\E[\tau_{\phys}\,|\,\tau_{\phys}<\s]}{\E[\tau_{\no}\,|\,\tau_{\no}<\s]}
\sim\frac{L_{\phys}}{L_{\no}}\quad\text{as }\lambda\to\infty,
\end{align}
where $L_{\phys}>0$ denotes the shortest distance from $U_{0}$ (the possible searcher starting locations) to $U_{\text{T}}$ (the target) that avoids the interior obstacles, and $L_{\no}\in(0,L_{\phys}]$ denotes the shortest distance from $U_{0}$ to $U_{\text{T}}$ regardless of interior obstacles. In \eqref{conj}, the inactivation time $\s$ is exponentially distributed with rate $\lambda>0$. Reference \cite{ma2020} obtained partial results to the effect of \eqref{conj} in the case when there are straight line paths from $U_{0}$ to $U_{\text{T}}$ and the principal curvatures of $\partial U_{\text{T}}$ satisfy certain constraints, but the general case was put forward as an open problem. We also note that Reference \cite{ma2020} proved \eqref{conj} in the case of a certain discretized diffusion model.

The results of the present work confirm the conjecture in \eqref{conj}. Indeed, combining Theorem~\ref{main} with the setting of section~\ref{manifold} above shows that if $\s$ is a gamma random variable with shape $\beta>0$ and rate $\lambda>0$, then for any moment $m\ge1$,
\begin{align}\label{conjs}
\begin{split}
\E[\tau_{\phys}^{m}\,|\,\tau_{\phys}<\s]
&\sim\left(\frac{L_{\phys}}{2\sqrt{D\lambda}}\right)^{m}\quad\text{as }\lambda\to\infty,\\
\E[\tau_{\no}^{m}\,|\,\tau_{\no}<\s]
&\sim\left(\frac{L_{\no}}{2\sqrt{D\lambda}}\right)^{m}\quad\text{as }\lambda\to\infty.
\end{split}
\end{align}
If we take $\beta=1$ (so that $\s$ is exponential with rate $\lambda$) and $m=1$, then \eqref{conjs} implies \eqref{conj}.

From a biological standpoint, the significance of \eqref{conj} is that if inactivation is fast (large $\lambda$), then the effect of a crowded  cytosolic space is reduced compared to an empty cytosolic space. That is, a diffusive signal which reaches the target before a fast inactivation time will take a nearly direct route, thereby reducing the effect of organelle barriers.

Our results show that this basic biological result is robust to how the heterogenous nature of the intracellular space is modeled. To explain, Reference \cite{ma2020} modeled the spatial heterogeneity of the cytosolic space by including hard reflecting barriers inside the domain. However, crowded intracellular geometry has also been modeled by an effective force field (i.e.\ a drift) that pushes searchers away from regions of dense obstacles. For example, Reference \cite{isaacson2011} used microscopic imaging to study how volume exclusion by chromatin affects the time it takes regulatory proteins to find binding sites, and the chromatin was modeled by an effective force field. As a third alternative, diffusion in the cytosolic space has also been modeled by diffusion with a space-dependent diffusion coefficient so that diffusion is slower in regions of dense obstacles \cite{cherstvy2013}.

Our results apply to each of these three modeling choices (reflecting obstacles, effective force field, or space-dependent diffusivity). Indeed, suppose $\tau_{\phys}$ denotes the FPT to reach the target in the case that the intracellular geometry is modeled by an effective force-field (rather than by reflecting obstacles). Suppose $\tau_{\no}$ follows the same setup as $\tau_{\phys}$ but with zero force-field inside the cell. This scenario fits into the framework of section~\ref{dd} above (with an appropriate choice of $b$ and ${\Sigma}$ in \eqref{sde} to effectively prevent the searcher from escaping the cell). Hence, we can apply Theorem~\ref{main} to conclude that \eqref{conjs} and \eqref{conj} hold with $L_{\phys}=L_{\no}$ (since the force does not effect the Riemannian metric used in the length \eqref{ll}).

Alternatively, suppose $\tau_{\phys}$ denotes the FPT to reach the target in the case that the intracellular geometry is modeled by a space-dependent diffusion coefficient, and suppose $\tau_{\no}$ follows the same setup as $\tau_{\phys}$ but with constant diffusion inside the cell. Again, applying Theorem~\ref{main} and the results of section~\ref{dd} imply that \eqref{conjs} and \eqref{conj} hold with $L_{\phys}\ge L_{\no}$, where these two lengths may differ because $L_{\phys}$ avoids regions of slow diffusion (see \eqref{ll}).

\subsection{\textcolor{black}{Other initial conditions}}
{\black

Our analysis assumed that the searcher cannot start arbitrarily close to the target. More precisely, we assumed that the unconditioned FPT distribution has the short-time behavior,
\begin{align}\label{log2d}
\lim_{t\to0+}t\ln\P(\textcolor{black}{\tau\le t})=-C<0,
\end{align}
which typically holds if the support of the initial searcher distribution does not intersect the target (see \eqref{away}). What is the conditional FPT distribution if the searcher can start arbitrarily close to the target?

If the searcher can start arbitrarily close to the target (meaning \eqref{away} is violated), then \eqref{log2d} may not hold and the distribution may instead decay algebraically,
\begin{align}\label{algebraic}
\P(\textcolor{black}{\tau\le t})
\sim At^{p},\quad\text{as }t\to0+,\quad\text{for some $A>0$, $p>0$.}
\end{align}
 For example, if the searchers are initially uniformly distributed in a bounded spatial domain, then it is typically the case that \cite{lawley2020comp, grebenkov2020}
\begin{align}\label{uniform}
\begin{split}
\P(\textcolor{black}{\tau\le t})
\sim\begin{cases}
A_{0}t^{1/2} & \text{if the target is perfectly absorbing},\\
A_{1}t & \text{if the target is partially absorbing},
\end{cases}\quad\text{as }t\to0+,
\end{split}
\end{align}
for constants $A_{0},A_{1}>0$ (see section~\ref{partially} for a description of perfectly absorbing and partially absorbing targets). 

If the unconditioned FPT distribution obeys \eqref{algebraic}, then we can determine the behavior of the conditional FPT moments in the fast inactivation limit. Indeed, Lemma~\ref{convenient} and a direct application of Laplace's method yields the following result. For simplicity, we assume $\sigma$ is exponentially distributed.

\begin{proposition}\label{str}
Let $\tau$ be any random variable satisfying \eqref{algebraic}. Suppose ${{\s}}>0$ is an independent exponential random variable with rate $\lambda>0$. If $m>0$, then
\begin{align*}
\E[\tau^{m}\,|\,\tau<{{\s}}]
\sim \frac{\Gamma(m+p)}{\Gamma(p)}\frac{1}{\lambda^{m}}
\quad\text{as }\lambda\to\infty.
\end{align*}
\end{proposition}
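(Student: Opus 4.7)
The plan is to use Lemma~\ref{convenient} with $\sigma$ exponentially distributed (so $f_{\s}(t) = \lambda e^{-\lambda t}$ and $S_{\s}(t) = e^{-\lambda t}$), which gives
\begin{align*}
\E[\tau^m \mid \tau < \s]
= \frac{I_m(\lambda)}{I_0(\lambda)} - \frac{m\, I_{m-1}(\lambda)}{\lambda\, I_0(\lambda)},
\qquad
I_q(\lambda) := \int_0^\infty t^q e^{-\lambda t} F(t)\,\dd t,
\end{align*}
valid for $q>-1$ (note $I_{m-1}$ requires $m>0$, which is the hypothesis). The whole problem then reduces to finding the leading-order $\lambda\to\infty$ asymptotics of $I_q(\lambda)$ for $q\in\{0,m-1,m\}$ under the assumption $F(t)\sim A t^p$.

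The key step is a Watson-type lemma: for every $q>-1$,
\begin{align*}
I_q(\lambda)\;\sim\; A\,\frac{\Gamma(q+p+1)}{\lambda^{q+p+1}}\qquad\text{as }\lambda\to\infty.
\end{align*}
I would prove this by the standard Laplace argument: split $I_q(\lambda)=\int_0^\eps+\int_\eps^\infty$. The tail $\int_\eps^\infty t^q e^{-\lambda t} F(t)\,\dd t$ is bounded by $\int_\eps^\infty t^q e^{-\lambda t}\,\dd t$ (since $F\le 1$), which is $O(\lambda^{-1} e^{-\lambda \eps})$ and hence exponentially subdominant compared with the claimed polynomial rate $\lambda^{-(q+p+1)}$. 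For the main contribution on $[0,\eps]$, given any $\eta>0$ choose $\eps$ so small that $|F(t)-At^p|\le \eta A t^p$ for $t\in(0,\eps]$; then sandwich
\begin{align*}
A(1-\eta)\int_0^\eps t^{q+p}e^{-\lambda t}\,\dd t \;\le\; \int_0^\eps t^q e^{-\lambda t} F(t)\,\dd t \;\le\; A(1+\eta)\int_0^\eps t^{q+p}e^{-\lambda t}\,\dd t,
\end{align*}
and note that $\int_0^\eps t^{q+p}e^{-\lambda t}\,\dd t \sim \Gamma(q+p+1)/\lambda^{q+p+1}$ by a change of variables $u=\lambda t$ together with dominated convergence (or, equivalently, since the tail beyond $\eps$ of the complete gamma integral is again exponentially small). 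Letting $\eta\to 0$ at the end gives the claimed equivalence.

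Substituting these equivalences into Lemma~\ref{convenient} yields
\begin{align*}
\E[\tau^m\mid \tau<\s]
\;\sim\; \frac{\Gamma(m+p+1)}{\Gamma(p+1)\,\lambda^m}-\frac{m\,\Gamma(m+p)}{\Gamma(p+1)\,\lambda^m}
\;=\; \frac{\Gamma(m+p+1)-m\,\Gamma(m+p)}{\Gamma(p+1)\,\lambda^m}.
\end{align*}
Applying the identities $\Gamma(m+p+1)=(m+p)\Gamma(m+p)$ and $\Gamma(p+1)=p\,\Gamma(p)$ collapses the numerator to $p\,\Gamma(m+p)$ and the denominator to $p\,\Gamma(p)\lambda^m$, giving $\Gamma(m+p)/(\Gamma(p)\lambda^m)$, as claimed. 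The main (but entirely routine) obstacle is the Watson-lemma step: everything else is algebra. A secondary delicate point is that the two terms of Lemma~\ref{convenient} are individually of order $\lambda^{-m}$ and partially cancel, so one must track both to leading order rather than discarding the second term as subdominant, which it is not.
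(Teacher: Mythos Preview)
Your proposal is correct and follows essentially the same route as the paper, which simply states that the result follows from Lemma~\ref{convenient} together with Laplace's method. Your Watson-type lemma is exactly the relevant instance of Laplace's method here, and your observation that both terms in Lemma~\ref{convenient} contribute at order $\lambda^{-m}$ (with the gamma-function identities collapsing the difference to $\Gamma(m+p)/\Gamma(p)$) is precisely the computation the paper has in mind but does not spell out.
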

Combining Proposition~\ref{str} with \eqref{uniform} thus gives the following typical behavior of the conditional FPTs for a uniform initial searcher distribution,
\begin{align*}
\E[\tau^{m}\,|\,\tau<{{\s}}]
\sim\begin{cases}
\frac{\Gamma(m+1/2)}{\Gamma(1/2)}\frac{1}{\lambda^{m}} & \text{if the target is perfectly absorbing},\\
\Gamma(m+1)\frac{1}{\lambda^{m}} & \text{if the target is partially absorbing},
\end{cases}\quad\text{as }t\to0+.
\end{align*}

Another initial distribution which allows the searcher to start arbitrarily close to the target is the so-called quasi-stationary distribution \cite{meleard12}. If the searcher starts in its quasi-stationary distribution, which is generally given by the normalized principal eigenfunction corresponding to the Fokker-Planck equation describing searcher motion \cite{handy2019}, then the FPT is exactly exponentially distributed with rate given by the appropriately scaled principal eigenvalue $\lambda_{0}>0$. Therefore, Proposition~\ref{str} applies in this case. In fact, since both $\tau$ and $\sigma$ are exponential, we can make a much stronger statement and obtain the full conditional distribution of $\tau$ for any $\lambda>0$,
\begin{align*}
\P(\textcolor{black}{\tau\le t}\,|\,\tau<\sigma)
=1-e^{-(\lambda+\lambda_{0})t},\quad t\ge0.
\end{align*}

Notice that the conditional moments in Proposition~\ref{str} decay faster than the conditional moments in Theorem~\ref{main}. This is not surprising, since Proposition~\ref{str} corresponds to initial distributions which allow the searcher to start infinitesimally close to the target. For so-called extreme FPTs or fastest FPTs, it was similarly shown that the initial searcher distribution strongly affects the FPT distribution in the limit of many searchers (see \cite{weiss1983} and the more recent works \cite{lawley2020comp} and \cite{grebenkov2020}).
}

\subsection{Relations to other prior work}

As described in the Introduction, the finite lifetime of diffusing agents characterizes many biological and chemical processes. In the physics and chemistry literatures, such searchers are called ``mortal'' or ``evanescent'' \cite{abad2010, abad2012, abad2013, yuste2013, meerson2015b, meerson2015, grebenkov2017}.

Motivated by sperm cells searching for an egg, Meerson and Redner studied the competition between searcher mortality (inactivation), redundancy (many searchers), and diversity (different searchers) \cite{meerson2015}. This prior work derived several asymptotic results by exploiting some exact formulas that are available for one-dimensional, purely diffusive searchers with a perfectly absorbing target \cite{meerson2015}. In fact, for such a searcher starting at $x=L$ and diffusing with diffusivity $D>0$ on the positive real line with a perfectly absorbing target at the origin, the authors found that 
\begin{align*}
\E[\tau\,|\,\tau<\s]
=\frac{L}{2\sqrt{D\lambda}}\quad\text{for all }\lambda>0,
\end{align*}
if the inactivation time $\sigma$ is exponential with rate $\lambda>0$ \cite{meerson2015}. That is, the general asymptotic formula that we derived for $\lambda\gg1$ is exact for any choice of $\lambda>0$ in this simplified problem.

In another interesting study of purely diffusive searchers with exponential lifetimes, Grebenkov and Rupprecht derived upper and lower bounds on certain distributions and statistics \cite{grebenkov2017}. In contrast to the present work, Reference \cite{grebenkov2017} considered the \emph{minimum} of the FPT to reach the target and the inactivation time,
\begin{align}\label{min}
\min\{\tau,\s\},
\end{align}
rather than the conditional FPT. In the limit of fast inactivation, these authors derived the approximation $\E[\min\{\tau,\s\}]\approx1/\lambda$, which is intuitive since $\E[\s]=1/\lambda$ and $\P(\tau<\s)\to0$ as $\lambda\to\infty$.

\subsection{Conclusion}

In this paper, we determined all the moments of the time it takes a diffusive searcher to find a target, conditioned that the searcher finds the target before a fast inactivation time. This moment formula holds under very general conditions on the diffusive searcher dynamics, the target, and the spatial domain. These results prove in significant generality that if inactivation is fast, then the conditional FPT compared to the unconditioned FPT is much faster, much less affected by obstacles, and much less variable.

These effects stem from fast inactivation filtering out searchers which do not take a direct path to the target. Similar effects arise from so-called fastest or extreme FPTs \cite{lawley2020uni, lawley2020esp1, lawley2020esp4, basnayake2019, schuss2019, coombs2019, redner2019, sokolov2019, rusakov2019, martyushev2019, tamm2019, basnayake2019c}. In both cases, the complexity of cell biology is modifying traditional notions of diffusive timescales.

\bibliography{library.bib}

\begin{thebibliography}{10}

\bibitem{abad2010}
{\sc E.~Abad, S.~Yuste, and K.~Lindenberg}, {\em Reaction-subdiffusion and
  reaction-superdiffusion equations for evanescent particles performing
  continuous-time random walks}, Phys Rev E, 81 (2010), p.~031115.

\bibitem{abad2012}
\leavevmode\vrule height 2pt depth -1.6pt width 23pt, {\em Survival probability
  of an immobile target in a sea of evanescent diffusive or subdiffusive traps:
  A fractional equation approach}, Phys Rev E, 86 (2012), p.~061120.

\bibitem{abad2013}
\leavevmode\vrule height 2pt depth -1.6pt width 23pt, {\em Evanescent
  continuous-time random walks}, Phys Rev E, 88 (2013), p.~062110.

\bibitem{Ammari2011}
{\sc H.~Ammari, J.~Garnier, H.~Kang, H.~Lee, and K.~S{\o}lna}, {\em The mean
  escape time for a narrow escape problem with multiple switching gates},
  Multiscale Model Simul, 9 (2011), pp.~817--833.

\bibitem{basnayake2019c}
{\sc K.~Basnayake and D.~Holcman}, {\em Fastest among equals: a novel paradigm
  in biology: {R}eply to comments: Redundancy principle and the role of extreme
  statistics in molecular and cellular biology}, Physics of life reviews, 28
  (2019), pp.~96--99.

\bibitem{basnayake2019}
{\sc K.~Basnayake, Z.~Schuss, and D.~Holcman}, {\em Asymptotic formulas for
  extreme statistics of escape times in 1, 2 and 3-dimensions}, J Nonlinear
  Sci, 29 (2019), pp.~461--499.

\bibitem{bender2013}
{\sc C.~Bender and S.~Orszag}, {\em Advanced mathematical methods for
  scientists and engineers {I: A}symptotic methods and perturbation theory},
  Springer Science \& Business Media, 2013.

\bibitem{benichou2010geometry}
{\sc O.~B{\'e}nichou, C.~Chevalier, J.~Klafter, B.~Meyer, and R.~Voituriez},
  {\em Geometry-controlled kinetics}, Nat Chem, 2 (2010), p.~472.

\bibitem{blum1989}
{\sc J.~Blum, G.~Lawler, M.~Reed, and I.~Shin}, {\em Effect of cytoskeletal
  geometry on intracellular diffusion}, Biophys J, 56 (1989), pp.~995--1005.

\bibitem{PB11}
{\sc P.~C. Bressloff, B.~R. Karamched, S.~D. Lawley, and E.~Levien}, {\em
  Diffusive transport in the presence of stochastically gated absorption}, Phys
  Rev E, 96 (2017), p.~022102.

\bibitem{PB2}
{\sc P.~C. Bressloff and S.~D. Lawley}, {\em Escape from a potential well with
  a randomly switching boundary}, J Phys A, 48 (2015), p.~225001.

\bibitem{PB3}
\leavevmode\vrule height 2pt depth -1.6pt width 23pt, {\em Escape from
  subcellular domains with randomly switching boundaries}, Multiscale Model
  Sim, 13 (2015), pp.~1420--1445.

\bibitem{PB10}
\leavevmode\vrule height 2pt depth -1.6pt width 23pt, {\em Hybrid colored noise
  process with space-dependent switching rates}, Phys Rev E, 96 (2017),
  p.~012129.

\bibitem{PB7}
\leavevmode\vrule height 2pt depth -1.6pt width 23pt, {\em Residence times of a
  brownian particle with temporal heterogeneity}, J Phys A, 50 (2017),
  p.~195001.

\bibitem{PB8}
\leavevmode\vrule height 2pt depth -1.6pt width 23pt, {\em Temporal disorder as
  a mechanism for spatially heterogeneous diffusion}, Phys Rev E - Rapid Comm,
  95 (2017), p.~060101.

\bibitem{cherstvy2013}
{\sc A.~G. Cherstvy, A.~V. Chechkin, and R.~Metzler}, {\em Anomalous diffusion
  and ergodicity breaking in heterogeneous diffusion processes}, New J Phys, 15
  (2013), p.~083039.

\bibitem{ward10b}
{\sc A.~F. Cheviakov, M.~J. Ward, and R.~Straube}, {\em An asymptotic analysis
  of the mean first passage time for narrow escape problems: {P}art {II:} {T}he
  sphere}, Multiscale Model Simul., 8 (2010), pp.~836--870.

\bibitem{chubynsky2014}
{\sc M.~V. Chubynsky and G.~W. Slater}, {\em Diffusing diffusivity: a model for
  anomalous, yet brownian, diffusion}, Phys Rev Lett, 113 (2014), p.~098302.

\bibitem{coombs2019}
{\sc D.~Coombs}, {\em First among equals: Comment on ``{R}edundancy principle
  and the role of extreme statistics in molecular and cellular biology'' by {Z.
  Schuss, K. Basnayake and D. Holcman}}, Physics of life reviews, 28 (2019),
  pp.~92--93.

\bibitem{erban07}
{\sc R.~Erban and S.~J. Chapman}, {\em Reactive boundary conditions for
  stochastic simulations of reaction-diffusion processes}, Phys Biol, 4 (2007).

\bibitem{etoc2018}
{\sc F.~Etoc, E.~Balloul, C.~Vicario, D.~Normanno, D.~Li{\ss}e, A.~Sittner,
  J.~Piehler, M.~Dahan, and M.~Coppey}, {\em Non-specific interactions govern
  cytosolic diffusion of nanosized objects in mammalian cells}, Nature
  materials, 17 (2018), pp.~740--746.

\bibitem{giese2018}
{\sc W.~Giese, G.~Milicic, A.~Schr{\"o}der, and E.~Klipp}, {\em Spatial
  modeling of the membrane-cytosolic interface in protein kinase signal
  transduction}, PLoS computational biology, 14 (2018), p.~e1006075.

\bibitem{godec2016x}
{\sc A.~Godec and R.~Metzler}, {\em Universal proximity effect in target search
  kinetics in the few-encounter limit}, Phys Rev X, 6 (2016), p.~041037.

\bibitem{godec2017}
\leavevmode\vrule height 2pt depth -1.6pt width 23pt, {\em First passage time
  statistics for two-channel diffusion}, J Phys A, 50 (2017), p.~084001.

\bibitem{golding2006}
{\sc I.~Golding and E.~C. Cox}, {\em Physical nature of bacterial cytoplasm},
  Phys Rev Lett, 96 (2006), p.~098102.

\bibitem{goodsell2018}
{\sc D.~S. Goodsell, M.~A. Franzen, and T.~Herman}, {\em From atoms to cells:
  Using mesoscale landscapes to construct visual narratives}, Journal of
  molecular biology, 430 (2018), pp.~3954--3968.

\bibitem{grebenkov2020}
{\sc D.~Grebenkov, R.~Metzler, and G.~Oshanin}, {\em From single-particle
  stochastic kinetics to macroscopic reaction rates: fastest first-passage time
  of $ n $ random walkers}, New Journal of Physics,  (2020).

\bibitem{grebenkov2017}
{\sc D.~Grebenkov and J.-F. Rupprecht}, {\em The escape problem for mortal
  walkers}, Journal Chem Phys, 146 (2017), p.~084106.

\bibitem{grebenkov2006}
{\sc D.~S. Grebenkov}, {\em Partially reflected brownian motion: a stochastic
  approach to transport phenomena}, Focus on probability theory,  (2006),
  pp.~135--169.

\bibitem{grebenkov2017imp}
\leavevmode\vrule height 2pt depth -1.6pt width 23pt, {\em First passage times
  for multiple particles with reversible target-binding kinetics}, J Chem Phys,
  147 (2017), p.~134112.

\bibitem{grebenkov2018strong}
{\sc D.~S. Grebenkov, R.~Metzler, and G.~Oshanin}, {\em Strong defocusing of
  molecular reaction times results from an interplay of geometry and reaction
  control}, Communications Chemistry, 1 (2018), pp.~1--12.

\bibitem{handy2019}
{\sc G.~Handy, S.~D. Lawley, and A.~Borisyuk}, {\em Role of trap recharge time
  on the statistics of captured particles}, Phys Rev E, 99 (2019), p.~022420.

\bibitem{holcman2014}
{\sc D.~Holcman and Z.~Schuss}, {\em The narrow escape problem}, {SIAM} Rev, 56
  (2014), pp.~213--257.

\bibitem{isaacson2011}
{\sc S.~Isaacson, D.~McQueen, and C.~Peskin}, {\em The influence of volume
  exclusion by chromatin on the time required to find specific {DNA} binding
  sites by diffusion}, Proc Natl Acad Sci, 108 (2011), pp.~3815--3820.

\bibitem{kolesov2007}
{\sc G.~Kolesov, Z.~Wunderlich, O.~N. Laikova, M.~S. Gelfand, and L.~A. Mirny},
  {\em How gene order is influenced by the biophysics of transcription
  regulation}, Proceedings of the National Academy of Sciences, 104 (2007),
  pp.~13948--13953.

\bibitem{lawley2020esp4}
{\sc S.~D. Lawley}, {\em Distribution of extreme first passage times of
  diffusion}, arXiv preprint arXiv:1910.12170,  (2019).

\bibitem{lawley2020uni}
\leavevmode\vrule height 2pt depth -1.6pt width 23pt, {\em Universal formula
  for extreme first passage statistics of diffusion}, Phys Rev E, 101 (2020),
  p.~012413.

\bibitem{lawley2019imp}
{\sc S.~D. Lawley and J.~B. Madrid}, {\em First passage time distribution of
  multiple impatient particles with reversible binding}, J Chem Phys, 150
  (2019), p.~214113.

\bibitem{lawley2020esp1}
\leavevmode\vrule height 2pt depth -1.6pt width 23pt, {\em A probabilistic
  approach to extreme statistics of brownian escape times in dimensions 1, 2,
  and 3}, J Nonlinear Sci,  (2020), pp.~1--21.

\bibitem{lawley2019dtmfpt}
{\sc S.~D. Lawley and C.~E. Miles}, {\em Diffusive search for diffusing targets
  with fluctuating diffusivity and gating}, Journal of Nonlinear Science,
  (2019).
\newblock https://doi.org/10.1007/s00332-019-09564-1.

\bibitem{lindsay2015}
{\sc A.~E. Lindsay, T.~Kolokolnikov, and J.~C. Tzou}, {\em Narrow escape
  problem with a mixed trap and the effect of orientation}, Phys Rev E, 91
  (2015).

\bibitem{liu2018}
{\sc Y.~Liu, P.~Li, L.~Fan, and M.~Wu}, {\em The nuclear transportation routes
  of membrane-bound transcription factors}, Cell Communication and Signaling,
  16 (2018), p.~12.

\bibitem{ma2020}
{\sc J.~Ma, M.~Do, M.~A. Le~Gros, C.~S. Peskin, C.~A. Larabell, Y.~Mori, and
  S.~A. Isaacson}, {\em Strong intracellular signal inactivation produces
  sharper and more robust signaling from cell membrane to nucleus}, bioRxiv,
  (2020).

\bibitem{lawley2020comp}
{\sc J.~B. Madrid and S.~D. Lawley}, {\em Competition between slow and fast
  regimes for extreme first passage times of diffusion}, Journal of Physics A:
  Mathematical and Theoretical,  (2020).

\bibitem{martyushev2019}
{\sc L.~M. Martyushev}, {\em Minimal time, weibull distribution and maximum
  entropy production principle: {C}omment on ``{R}edundancy principle and the
  role of extreme statistics in molecular and cellular biology'' by {Z. Schuss
  et al.}}, Physics of life reviews, 28 (2019), pp.~83--84.

\bibitem{meerson2015b}
{\sc B.~Meerson}, {\em The number statistics and optimal history of
  non-equilibrium steady states of mortal diffusing particles}, J Stat Mech:
  Theory Exp, 2015 (2015), p.~P05004.

\bibitem{meerson2015}
{\sc B.~Meerson and S.~Redner}, {\em Mortality, redundancy, and diversity in
  stochastic search}, Phys Rev Lett, 114 (2015), p.~198101.

\bibitem{meleard12}
{\sc S.~M{\'e}l{\'e}ard, D.~Villemonais, et~al.}, {\em Quasi-stationary
  distributions and population processes}, Probability Surveys, 9 (2012),
  pp.~340--410.

\bibitem{munoz2009}
{\sc J.~Mu{\~n}oz-Garc{\'\i}a, Z.~Neufeld, and B.~N. Kholodenko}, {\em
  Positional information generated by spatially distributed signaling
  cascades}, PLoS computational biology, 5 (2009).

\bibitem{neves2008}
{\sc S.~R. Neves, P.~Tsokas, A.~Sarkar, E.~A. Grace, P.~Rangamani, S.~M.
  Taubenfeld, C.~M. Alberini, J.~C. Schaff, R.~D. Blitzer, I.~I. Moraru,
  et~al.}, {\em Cell shape and negative links in regulatory motifs together
  control spatial information flow in signaling networks}, Cell, 133 (2008),
  pp.~666--680.

\bibitem{newby2016}
{\sc J.~Newby and J.~Allard}, {\em First-passage time to clear the way for
  receptor-ligand binding in a crowded environment}, Phys Rev Lett, 116 (2016),
  p.~128101.

\bibitem{norris1997}
{\sc J.~R. Norris}, {\em Heat kernel asymptotics and the distance function in
  {L}ipschitz {R}iemannian manifolds}, Acta Mathematica, 179 (1997),
  pp.~79--103.

\bibitem{ward10}
{\sc S.~Pillay, M.~J. Ward, A.~Peirce, and T.~Kolokolnikov}, {\em {An
  asymptotic analysis of the mean first passage time for narrow escape
  problems: Part I: Two-dimensional domains}}, Multiscale Model Simul., 8
  (2010), pp.~803--835.

\bibitem{pulkkinen2013}
{\sc O.~Pulkkinen and R.~Metzler}, {\em Distance matters: the impact of gene
  proximity in bacterial gene regulation}, Physical review letters, 110 (2013),
  p.~198101.

\bibitem{redner2001}
{\sc S.~Redner}, {\em A guide to first-passage processes}, Cambridge University
  Press, 2001.

\bibitem{redner2019}
{\sc S.~Redner and B.~Meerson}, {\em Redundancy, extreme statistics and
  geometrical optics of {B}rownian motion: {C}omment on ``{R}edundancy
  principle and the role of extreme statistics in molecular and cellular
  biology'' by {Z. Schuss et al.}}, Physics of life reviews, 28 (2019),
  pp.~80--82.

\bibitem{Reingruber2009}
{\sc J.~Reingruber and D.~Holcman}, {\em Gated narrow escape time for molecular
  signaling}, Phys Rev Lett, 103 (2009), p.~148102.

\bibitem{reingruber2010}
\leavevmode\vrule height 2pt depth -1.6pt width 23pt, {\em Narrow escape for a
  stochastically gated {Brownian} ligand}, Journal of Physics: Condensed
  Matter, 22 (2010), p.~065103.

\bibitem{rusakov2019}
{\sc D.~A. Rusakov and L.~P. Savtchenko}, {\em Extreme statistics may govern
  avalanche-type biological reactions: Comment on ``{R}edundancy principle and
  the role of extreme statistics in molecular and cellular biology'' by {Z.
  Schuss, K. Basnayake, D. Holcman}}, Physics of life reviews,  (2019).

\bibitem{schuss2019}
{\sc Z.~Schuss, K.~Basnayake, and D.~Holcman}, {\em Redundancy principle and
  the role of extreme statistics in molecular and cellular biology}, Physics of
  Life Reviews,  (2019).

\bibitem{schuss_narrow_2007}
{\sc Z.~Schuss, A.~Singer, and D.~Holcman}, {\em The narrow escape problem for
  diffusion in cellular microdomains}, Proceedings of the National Academy of
  Sciences, 104 (2007), pp.~16098--16103.

\bibitem{sokolov2019}
{\sc I.~M. Sokolov}, {\em Extreme fluctuation dominance in biology: On the
  usefulness of wastefulness: Comment on ``{R}edundancy principle and the role
  of extreme statistics in molecular and cellular biology'' by {Z. Schuss, K.
  Basnayake and D. Holcman.}}, Physics of life reviews,  (2019).

\bibitem{tamm2019}
{\sc M.~V. Tamm}, {\em Importance of extreme value statistics in biophysical
  contexts: {C}omment on ``{R}edundancy principle and the role of extreme
  statistics in molecular and cellular biology.''}, Physics of life reviews,
  (2019).

\bibitem{vaccario2015}
{\sc G.~Vaccario, C.~Antoine, and J.~Talbot}, {\em First-passage times in
  $d$-dimensional heterogeneous media}, Phys Rev Lett, 115 (2015), p.~240601.

\bibitem{varadhan1967}
{\sc S.~R.~S. Varadhan}, {\em Diffusion processes in a small time interval},
  Commun Pure Appl Math, 20 (1967), pp.~659--685.

\bibitem{weiss1983}
{\sc G.~H. Weiss, K.~E. Shuler, and K.~Lindenberg}, {\em Order statistics for
  first passage times in diffusion processes}, J Stat Phys, 31 (1983),
  pp.~255--278.

\bibitem{woringer2014}
{\sc M.~Woringer, X.~Darzacq, and I.~Izeddin}, {\em Geometry of the nucleus: a
  perspective on gene expression regulation}, Curr Opin Chem Biol, 20 (2014),
  pp.~112--119.

\bibitem{yuste2013}
{\sc S.~Yuste, E.~Abad, and K.~Lindenberg}, {\em Exploration and trapping of
  mortal random walkers}, Phys Rev Lett, 110 (2013), p.~220603.

\end{thebibliography}
\bibliographystyle{siam}

\end{document}